\documentclass[12pt]{article}
\usepackage{authblk}%%%for authors
\usepackage{mathrsfs}
\usepackage{amsthm}
\usepackage{amssymb}
\usepackage{latexsym}
\usepackage{amsmath,amsfonts}
\usepackage{mathrsfs}
\usepackage{cases}
\usepackage{latexsym,bm}
\usepackage{indentfirst}
\usepackage{xcolor}
\usepackage{ifpdf}
\usepackage{graphicx}
\usepackage{epstopdf}
\usepackage{epsfig}
\usepackage{psfrag}
\usepackage{enumitem}
\usepackage{epstopdf}
\usepackage{verbatim}
\usepackage{color}

\usepackage[
pdfauthor={Lu},
pdftitle={Tight cuts in matching covered graphs},
pdfstartview=XYZ,
bookmarks=true,
colorlinks=true,
linkcolor=blue,
urlcolor=blue,
citecolor=blue,
bookmarks=true,
linktocpage=true,
hyperindex=true
]{hyperref}

\newtheorem{lem}{Lemma}[section]
\newtheorem{thm}[lem]{Theorem}
\newtheorem{cor}[lem]{Corollary}

\newtheorem{pro}[lem]{Proposition}

\newcounter{countclaim}
\def\inclaim{\addtocounter{countclaim}{1}
	{\vspace{0.2 cm}\noindent {\bf Claim \thecountclaim}: }}

\usepackage{color}
\begin{document}

\title{Tight cuts in matching covered graphs\footnote{ The research
		is partially supported by  NSFC (Nos. 12271235 and 12371340).
} }

\author[1]{\small Fuliang Lu\thanks{Corresponding author.
Email: flianglu@163.com.}}

\author[2]{\small Fengming Dong\thanks{Email: fengming.dong@nie.edu.sg and donggraph@163.com.}}

\affil[1]{\footnotesize School of Mathematics and Statistics,
Minnan Normal University, Zhangzhou, China}

\affil[2]{\footnotesize National Institute of Education,
	Nanyang Technological University, Singapore}

\date{}

\maketitle

\begin{abstract}
    An edge cut $C$ of a graph $G$ is {\em tight} if $|C\cap M|=1$ for every perfect matching $M$ of $G$. Barrier-cuts and 2-separation cuts, also referred to as {\em ELP-cuts}, are two important types of tight cuts in matching covered graphs.
     Edmonds, Lov\'asz and Pulleyblank [Brick decompositions and the matching rank of graphs, Combinatorica 2(3) (1982) 247-274] proved that if a matching covered graph has a non-trivial tight cut, then it also has a non-trivial ELP-cut.
    In confirmation of a conjecture proposed by
    Carvalho, Lucchesi and Murty, Chen et. al. [Laminar tight cuts in matching covered graphs, J. Comb. Theory, Ser. B, 150 (2021) 177-194]
    showed that if $C$ is a non-trivial tight cut of a matching covered graph $G$, then $G$ has at least one $C$-sheltered non-trivial barrier or a 2-separation cut that is laminar with $C$.

    In this paper,  we present a complete  characterization of   non-trivial tight cuts in matching covered graphs,
    from which the result of Chen et. al. can be derived directly. Moreover, we show that the lower bound of the number of  $C$-sheltered non-trivial barrier or a 2-separation cut that is laminar with $C$ in the result of Chen et. al.
    is sharp.

\end{abstract}

\par {\small {\it Keywords:}\ \   matching covered graphs; tight cuts; ELP-cuts.}

\vskip 0.2in \baselineskip 0.1in
%%%%%
%%%%%
%%%%%%%%%%%%%%%%%%%%%%%%%%%%%%%%%%%%%%%%%%%%%%%%%%%%%%%%%%%%%%%%%%%%%%%%%%%%%%%%%%%%%%%%%%%%%%%%%%%%%%%%%%%%%%%%%%%%%%%%%%%%%%%lufuliang

\section{Introduction}

Graphs considered in this paper may have multiple edges, but no loops. We follow \cite{BM08} for undefined notations and terminologies.

For any graph $G$, let $V(G)$ and
$E(G)$ be its vertex set and edge set, respectively.
%Let $G$ be a graph with the vertex set $V(G)$ and the edge set $E(G)$.
  For $X,Y\subseteq V(G)$, by $E[X,Y]$ we mean the set of edges in $G$ each of which has one end point in
  $X$ and the other end point in $Y$.
  % with one end vertex in $X$ and the other end vertex in $Y$.
  Clearly,  $\partial(X):=E[X,\overline{X}]$ is an edge cut of $G$
for any nonempty proper subset $X$ of $V(G)$,
  where $\overline{X}=V(G)\backslash X$.
  The vertex sets $X$ and $\overline{X}$ are shores of $\partial(X)$.
  An edge cut $\partial(X)$ is {\em trivial} if either $|X|=1$ or $|\overline{X}|=1$.
  For any nonempty subset $X$ of $V(G)$,
  the subgraph of $G$ induced by $X$ is denoted by $G[X]$; the set of all vertices of $V(G)\setminus X$ that are adjacent to some vertex in $X$ is denoted by $N_G(X)$.
  Two edge cuts $\partial(X)$ and $\partial(Y)$ {\em cross}
  if each of $X\cap Y$, $X\cap \overline{Y}$, $\overline{X}\cap Y$ and $\overline{X}\cap \overline{Y}$ is not empty. Two edge cuts are {\em laminar} if they do not cross.

Let $\partial(X)$ be an edge cut of $G$. Denoted by $G/(X\rightarrow x)$ the graph obtained from $G$ by contracting $X$ to a singleton $x$ (and removing any resulting loops). The graphs $G/(X\rightarrow x)$ and $G/(\overline{X}\rightarrow \overline{x})$ are $\partial(X)$-contractions of $G$.

A connected non-trivial graph $G$ is {\em matching covered} if each edge in $G$ lies in a perfect matching. An edge cut $\partial(X)$ is {\em tight} of $G$ if $|\partial(X)\cap M|=1$ for every perfect matching $M$ of $G$. A trivial edge cut is obviously a tight cut. A matching covered graph without non-trivial tight cuts is called
a {\em brace} if it is bipartite, and
is called a {\em brick} if it is non-bipartite.
There is a procedure called a {\em tight cut decomposition}, due to Lov\'{a}sz \cite{Lovasz87}, which can be applied to a matching covered graph $G$ to produce a list of unique braces and bricks. In particular, any two applications of the tight cut decomposition procedure yield the same number of bricks \cite{Lovasz87}, which is called the brick number of $G$.
Using the tool of tight cut decomposition, many  problems about matching covered graphs can be reduced to braces and bricks.

Let $G$ be a matching covered graph.
We denote by
$o(H)$ the number of odd components of a graph $H$.
A nonempty vertex set $B$ of $G$
%that has a perfect matching
is a {\em barrier} if $o(G-B)=|B|$.
An edge cut $C$ of $G$ is a {\em barrier-cut} if there exists a barrier $B$ of $G$ and an odd component $Q$ of $G-B$ such that $C=\partial(V(Q))$.
A vertex set $S$ of $G$ is a {\em 2-separation} if $|S|=2$, $G-S$ is disconnected and each of the components of $G-S$ is even.
Let $\{u,v\}$ be a 2-separation of $G$, and let us divide the components of $G-\{u,v\}$ into two nonempty subgraphs $G_1$ and $G_2$. The cuts $\partial(V(G_1)+u)$ and $\partial(V(G_1)+v)$ are both {\em 2-separation cuts} associated with $\{u,v\}$ of $G$.
Barrier-cuts and 2-separation cuts, called {\em ELP-cuts}, play an important role
in
tight cut decomposition (it can be checked that  ELP-cuts are tight cuts).
The following theorem,
called ELP-Theorem, due to Edmonds, Lov\'{a}sz, and Pulleyblank \cite{ELP82}, is a fundamental result of matching theory.

\begin{thm}[\cite{ELP82}]\label{thm:ELP-th}
    Every matching covered graph that has a non-trivial tight cut has a non-trivial barrier or a 2-separation.
\end{thm}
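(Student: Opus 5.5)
The plan is to argue by contradiction. Suppose $G$ is matching covered, has a non-trivial tight cut $C=\partial(X)$, and has no non-trivial barrier and no 2-separation. The goal is to derive a contradiction via the structure of perfect matchings. First observe that $|X|$ and $|\overline{X}|$ are odd, since $C$ is tight, and that both are at least $3$. Since $G$ has no non-trivial barrier, every barrier is a singleton. By Tutte's theorem with barriers, this means that for every pair of distinct vertices $u,v$ the graph $G-u-v$ has a perfect matching; otherwise $G-u-v$ would violate Tutte's condition, and a maximal barrier $B\ni u,v$ of $G$ with $|B|\ge 2$ would arise. So the hypothesis makes $G$ bicritical, except when $G$ is bipartite, which I treat separately.

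\textbf{Non-bipartite case.} Pick $u\in X$ and $v\in X$ distinct; such a pair exists because $|X|\ge 3$. A perfect matching $M'$ of $G-u-v$ matches an odd number of vertices of $X-u-v$ across $C$, so $|M'\cap C|$ is odd. I then want to extend $M'$, or modify it, into a perfect matching of $G$ using at least $3$ edges of $C$. The natural device is to choose $u,v$ adjacent via an edge $e=uv$ inside $X$; then $M'+e$ is a perfect matching, and $|(M'+e)\cap C|=|M'\cap C|$. So I need $|M'\cap C|\ge 3$. The plan is to use $3$-connectivity: no 2-separation plus the absence of non-trivial barriers should give that $G$ is $3$-connected, since a 2-vertex cut with an odd component would produce a barrier of size $2$. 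Bicriticality then lets me choose $u\in X$ and $v\in\overline{X}$ so that $|M'\cap C|$ is even. Next, pick an edge $f\in C$ not incident to $u,v$; $3$-connectivity should make it possible to choose $u,v$ and $M'$ with $M'\cap C\neq\emptyset$. Now take an $M'$-alternating path argument: add $u$ and $v$ back via edges $ux, vy$ and an alternating path, yielding a perfect matching whose intersection with $C$ has parity odd and size at least $3$. This is a contradiction.

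\textbf{Bipartite case.} Here $G$ has color classes $A,B$. A non-trivial tight cut forces $|X\cap A|-|X\cap B|=\pm1$. Say $|X\cap A|=|X\cap B|+1$. Then $N(X\cap B)\subseteq X\cap A$, since an edge from $X\cap B$ to $\overline{X}\cap A$ would lie in a perfect matching, and that matching would then need $2$ more cut edges. I then show that $B'=(X\cap A)\cup(\overline{X}\cap A)$ restricted appropriately yields a non-trivial barrier. Concretely, $\overline{X}\cap A$ together with vertices of $X\cap A$ gives a barrier of size at least $2$; the natural candidate is $A\cap \overline{X}\cup\{$one vertex$\}$, via Hall counting.

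The main obstacle is the non-bipartite parity/extension step: producing a perfect matching meeting $C$ in at least three edges from bicriticality. If the alternating-path route fails, I would instead invoke the standard fact that in a bicritical graph every edge cut with both shores of size at least $3$ is crossed by a perfect matching $M$ with $|M\cap C|\ge3$, obtained from $G-u-v$ with $u,v\in\overline{X}$ both adjacent to $X$.
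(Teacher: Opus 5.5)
Your reduction is correct and standard. With no non-trivial barrier, Tutte's theorem makes $G-u-v$ perfectly matchable for all $u\neq v$. A $2$-vertex cut that is not a $2$-separation has exactly two odd components and is therefore a barrier. So $G$ is a $3$-connected bicritical graph. Your bipartite case is vacuous: a bipartite graph with a non-trivial tight cut has at least six vertices, so either colour class is already a non-trivial barrier, and no Hall counting is needed.

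However, the reduction only shows that the theorem is \emph{equivalent} to ``a $3$-connected bicritical graph has no non-trivial tight cut'', and that statement is the whole content of the ELP theorem. The paper does not prove it. It cites Edmonds--Lov\'asz--Pulleyblank (a linear-programming proof) and the later proofs of Szigeti and of Carvalho--Lucchesi--Murty, each of which needs a genuine structural argument exactly at this point.

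Your non-bipartite case contains no such argument.
\begin{itemize}
\item If $u,v\in X$ are adjacent, tightness forces $|M'\cap C|=1$ for \emph{every} perfect matching $M'$ of $G-u-v$, since $M'+uv$ is perfect in $G$.
\item If $uv\in C$, tightness forces every perfect matching of $G-u-v$ to avoid $C$.
\end{itemize}
So ``I need $|M'\cap C|\ge 3$'' and ``$3$-connectivity should make it possible to choose $u,v,M'$ with $M'\cap C\neq\emptyset$'' merely restate the desired contradiction. Nothing explains how $3$-connectivity is used. When $u,v$ are not adjacent, re-inserting them along an $M'$-augmenting path changes the count by the number of non-$M'$ edges of the path in $C$ minus the number of $M'$-edges of the path in $C$. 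You do not control this, so ``odd and at least $3$'' is unjustified. Also, the evenness of $|M'\cap C|$ for $u\in X$, $v\in\overline{X}$ is automatic by parity, not a choice.

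Your fallback ``standard fact'' is false. Take two copies of $K_4$ glued along an edge $xy$, and let $a,b$ be the other two vertices of the first copy. This graph is bicritical, yet $\partial(\{x,a,b\})$ is a non-trivial tight cut. The $2$-separation cuts of the bicritical graphs $H_n$ in the remarks after Theorem~\ref{thm:lamnum} are further examples. Adding $3$-connectivity to the hypothesis makes the ``fact'' true, but then it is identical to the theorem, so invoking it is circular.

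What is missing is an actual lemma that turns the tightness of $C$ in a bicritical graph into a $2$-separation. The cited proofs obtain this via LP duality, or via barrier and uncrossing arguments.
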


The proof of Theorem \ref{thm:ELP-th} given by Edmonds, Lov\'{a}sz, and Pulleyblank is based on linear programming techniques \cite{ELP82}. Szigeti \cite{Szi02}
provided a purely graph-theoretical
proof of this theorem.
Carvalho, Lucchesi and Murty \cite{CLM18} gave an alternative proof of ELP-Theorem by using properties of barriers in graphs that has a perfect matching.

\def \ELP {{\cal ELP}}

  A barrier $B$ is $C$-{\em sheltered} if $B$ is a subset of a shore of $C$.
  Note that for a tight cut $C$,
  a $C$-sheltered barrier-cut is
  laminar with $C$.
Carvalho, Lucchesi and Murty \cite{CLM18} conjectured that given any non-trivial tight cut $C$ in a matching covered graph that is not an ELP-cut, there exists a non-trivial ELP-cut $D$  that is laminar with
%does not cross
$C$, and proved this conjecture
for bicritical graphs and also for matching covered graphs
with brick number at most two.
For a matching covered
graph $G$ and a non-trivial tight cut $C$,
let $\ELP_G(C)$
be the collection of
$C$-sheltered non-trivial barrier-cuts
and 2-separation cuts that are
laminar with $C$.
Chen et. al. proved
the conjecture of Carvalho, Lucchesi and Murty~\cite{CLM18},
as stated below.
% by presenting the following theorem.

\begin{thm}[\cite{CFLLZ20}]\label{thm:lam}
Let $C$ be a non-trivial tight cut of a matching covered graph $G$. Then $|\ELP_G(C)|\geq 1$.
\end{thm}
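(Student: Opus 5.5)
We argue by induction on $|V(G)|$, using Theorem~\ref{thm:ELP-th} as the engine and the $C$-contractions to localize. Write $C=\partial(X)$ with $|X|,|\overline{X}|\geq 3$ (both odd, since $C$ is tight). If $C$ is itself a barrier-cut with a $C$-sheltered barrier, or a 2-separation cut, then $C\in\ELP_G(C)$ and we are done. Observe that any barrier-cut $\partial(V(Q))$ for a barrier $B$ automatically has $B$ on one side, namely in $\overline{V(Q)}$. Hence it suffices to find a non-trivial tight cut of the right type whose barrier lies inside $X$ or inside $\overline{X}$, or a 2-separation cut not crossing $C$.

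The main step uses the two $C$-contractions $G_1=G/(\overline{X}\to\overline{x})$ and $G_2=G/(X\to x)$. Both are matching covered and strictly smaller than $G$.

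\emph{Case 1: some $G_i$, say $G_1$, is not a brick or brace.} Then $G_1$ has a non-trivial tight cut. By Theorem~\ref{thm:ELP-th}, $G_1$ has a non-trivial barrier $B_1$ or a 2-separation $\{u,v\}$. We lift this structure to $G$ and need two facts.
\begin{itemize}
\item A barrier $B_1$ of $G_1$ is a barrier of $G$ after replacing $\overline{x}$ by an appropriate set. Concretely, if $\overline{x}\notin B_1$, then $B_1$ is a barrier of $G$, because the odd component of $G_1-B_1$ containing $\overline{x}$ expands to an odd component of $G-B_1$ (as $|\overline{X}|$ is odd). The barrier-cut of a non-trivial odd component not containing $\overline{x}$ then lies inside $X$, so it is laminar with $C$ and $C$-sheltered.
\item If every non-trivial barrier of $G_1$ contains $\overline{x}$, we instead use maximal barriers. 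In a matching covered graph every vertex lies in a maximal barrier, whose odd components are factor-critical. We try to choose the ELP-cut of $G_1$ so that its shore avoids $\overline{x}$. Then its lift is a cut of $G$ with one shore contained in $X$, which is laminar with $C$, and its barrier is contained in $X\cup\{\text{nothing from }\overline{X}\}$.
\end{itemize}
For a 2-separation $\{u,v\}$ of $G_1$ with $\overline{x}\notin\{u,v\}$, the component containing $\overline{x}$ expands while keeping its parity. So $\{u,v\}$ remains a 2-separation of $G$, and the cut choosing the side that avoids $\overline{x}$ is laminar with $C$.

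\emph{Case 2: both contractions are bricks or braces.} Here we use the structure of $G$ directly. Apply Theorem~\ref{thm:ELP-th} to $G$ and take a non-trivial barrier $B$ or 2-separation $S$, choosing it to minimize crossing with $C$. We then use the standard uncrossing lemma for tight cuts. If two tight cuts $\partial(X)$ and $\partial(Y)$ cross with $|X\cap Y|$ odd, then $\partial(X\cap Y)$ and $\partial(X\cup Y)$ are tight, and there are no edges between $X\cap\overline{Y}$ and $\overline{X}\cap Y$. Because $G_1$ and $G_2$ have no non-trivial tight cuts, an ELP-cut crossing $C$ must have $X\cap Y$ or $X\cup Y$ yielding a trivial cut in a contraction. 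This forces $|X\cap Y|=1$ or $|\overline{X}\cap\overline{Y}|=1$. Using that single vertex, we modify $B$ or $S$ to a new barrier or 2-separation that does not cross $C$.

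\textbf{Main obstacle.} The hardest step is the lifting in Case 1 when the only non-trivial ELP structure of the contraction involves the contraction vertex $\overline{x}$, for example $\overline{x}\in B_1$ or $\overline{x}\in\{u,v\}$. The lift then places vertices of $\overline{X}$ into the barrier, and the result may fail to be $C$-sheltered. My first attempt would be to choose $B_1$ maximal and then argue via the Dulmage–Mendelsohn/Gallai–Edmonds structure that some odd component of $G_1-B_1$ avoiding $\overline{x}$ is non-trivial. Failing that, I would pass to $G_2$ and combine information from both sides, using induction on a tight cut of a contraction that is closer to $C$.
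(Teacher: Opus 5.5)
\textbf{There is a genuine gap, exactly where you flag the ``main obstacle.''} The missing case is Case~1 when every non-trivial barrier and every 2-separation of $G_1=G/(\overline{X}\to\overline{x})$ contains $\overline{x}$. Nothing in the proposal handles it.

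\begin{itemize}
\item Suppose $\overline{x}\in B_1$ and $Q$ is an odd component of $G_1-B_1$. The lifted cut $\partial_G(V(Q))$ is tight and laminar with $C$, but in general it is neither a barrier-cut nor a 2-separation cut of $G$. The reason is that $N_G(V(Q))$ contains vertices of $\overline{X}$ that need not lie in any barrier together with $B_1\setminus\{\overline{x}\}$. So you have only traded $C$ for another tight cut that is not known to be ELP.
\item The phrase ``its barrier is contained in $X\cup\{\text{nothing from }\overline{X}\}$'' is false when $\overline{x}\in B_1$.
\item A 2-separation $\{u,\overline{x}\}$ of $G_1$ is not treated at all.
\end{itemize}

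The underlying problem is that $\overline{x}$ stands for an arbitrary odd connected set. There is no single vertex of $G$ to substitute for it, and Theorem~\ref{thm:ELP-th} gives no control over where its output sits relative to $\overline{x}$. A contraction can be bicritical with every 2-separation passing through the contraction vertex; the paper's $H_n'$ shows this after contracting $\{u_1,u_2\}$. ``Passing to $G_2$'' may meet the same situation, or a brick.

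Your induction on $|V(G)|$ is announced but never invoked. Even if invoked, a bare hypothesis $|\ELP|\ge 1$ in a contraction would not help, because the one laminar ELP-cut it supplies may be precisely the one through the contraction vertex.

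Your Case~2, by contrast, is sound in substance once you use the cut form of the ELP theorem. A non-trivial ELP-cut laminar with $C$ must equal $C$. A crossing one forces $|X\cap Y|=1$ \emph{and} $|\overline{X}\cap\overline{Y}|=1$ (not ``or''), which makes $C$ itself a 2-separation cut.

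\textbf{How the paper avoids this.} The paper proves this statement as Claim~1 inside the induction for Theorem~\ref{main}. It never contracts a shore of $C$. Instead it contracts along an ELP-cut whose contraction vertex can be re-expanded to a single vertex of $G$. It argues by contradiction, assuming there is no $C$-sheltered non-trivial barrier and every 2-separation cut crosses $C$.

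\emph{Bicritical case.} It takes a 2-separation $F$ with an inclusion-minimal side $Y$ and shows $|F\cap\overline{X}|=1$. It then contracts the opposite side of the corresponding 2-separation cut. Lemma~\ref{lem:2-sep} lifts any ELP-structure of the contraction by replacing the contraction vertex with the vertex $t\in F\cap\overline{X}$. This contradicts bicriticality or the minimality of $Y$.

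\emph{Non-bicritical case.} The argument runs as follows.
\begin{itemize}
\item It takes a maximal non-trivial barrier $B$.
\item Parity counting with perfect matchings through edges of $C$ (Claims~1.1--1.2) gives $B\cap\overline{X}=\{y\}$, and every component of $G-B$ meets $X$ in an odd set.
\item It contracts $\overline{V(G_1)}$ for a component $G_1$ meeting $\overline{X}$.
\item It rules out the contracted cut being a barrier-cut, using the no-sheltered-barrier assumption and the maximality of $B$.
\item It applies a \emph{structural} induction hypothesis: every non-barrier tight cut is an essential $GS$-cut, which by Theorem~\ref{thm:lamnum} yields two laminar 2-separation cuts.
\item This forces a 2-separation $F_2\ni g$, which lifts to the 2-separation $(F_2\setminus\{g\})\cup\{y\}$ of $G$.
\end{itemize}

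So the two missing ingredients are these. First, contract along liftable ELP-cuts rather than along $C$. Second, carry a stronger inductive statement than $|\ELP_G(C)|\ge 1$.
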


Let $\partial(X)$ be an edge cut of a matching covered  graph $G$,
and let
$ \mathcal{F}$ be
the set of  2-separations $F$ of $G$
such that $|F\cap X|=1$.
We  say $\partial(X)$ is a {\em generalized 2-separation-cut} ($GS$-cut for short)  if
 %two 2-separations $F$ and $F'$ in $ \mathcal{F}$,
the following
 conditions are satisfied
for any $F, F'\in \mathcal{F}$:

 \vspace{-3 mm}

 \begin{enumerate}
 	\item [1)]
 %for any two 2-separations $F$ and $F'$ in $ \mathcal{F}$,
 there exists a set of  2-separations $\{F_1,F_2,\dots, F_s\}
 \subset \mathcal{F}$
such that
%$ \mathcal{F'}\subset \mathcal{F}$,
$F=F_1$, $F'=F_s$  and $|F_{i}\cap F_{i+1}|=1$ for $i=1,2,\dots,s-1$;
and

\item [2)] if $|F\cap F'|=1$
and $Y$ (resp. $Y'$)  is the vertex set of a component of $G-F$ (resp. $G-F'$) such that $Y\subset Y'$,   then all vertices in each odd component of $G[Y'\setminus (Y\cup F)]$ lie in the shore of $\partial (X)$ that is different from the one containing the only vertex in
$F\cap F'$, and
all vertices in each even component of $G[Y'\setminus (Y\cup F)]$ lie in  the same shore of $\partial (X)$;
if $Y\cup F$ contains no 2-separations in $\mathcal{F}\setminus \{F\}$, then $Y\subset X$ or  $Y\subset \overline{X}$.
\end{enumerate}
Specially, a 2-separation cut is a  $GS$-cut. We say $ \mathcal{F}$ is
the set of  2-separations  of $G$ associated with $\partial (X)$. It should be noted that if $\partial(X)$ is a $GS$-cut of $G$ and $e\in \partial(X)$, then   at least one end point of $e$ lies in a  2-separation $F$ of   $G$ satisfying $|F\cap X|= 1$.
For any $F\in {\cal F}$,
if there exists a vertex $w\in V(G)$
such that $F\cap F'\subseteq \{w\}$
for each $F'\in {\cal F}\setminus \{F\}$,
then $F$ is called an
{\it end-2-separation} of ${\cal F}$. Obviously, if $F$ is an end-2-separation of ${\cal F}$, then there exists a
component of $G-F$, say $G_1$, such that $V(G_1)\cup F$
 contains no 2-separations in ${\cal F}\setminus \{F\}$.

\begin{figure}[h]
	\centering
	\includegraphics[width=6cm] {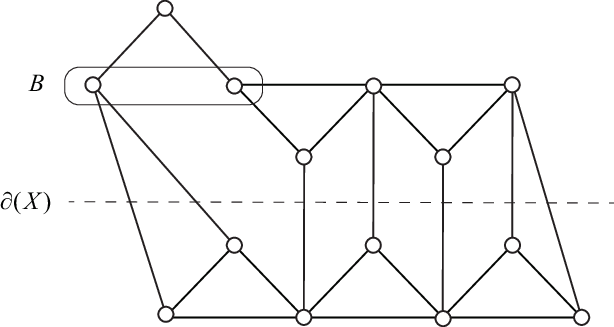}
	\caption{an essential $GS$-cut}
	\label{f1}
\end{figure}

Let $C=\partial(X)$ be a non-trivial tight cut of $G$ and $B$ be a $C$-sheltered non-trivial barrier such that $B\subset \overline{X}$. By Corollary~\ref{cor:connected}, $G[{X}]$ is connected. Then $X\subset V(G_1)$
for some  component $G_1$
of $G-B$.
The contraction of the barrier $B$ means contracting $\overline{ V(G_1)}$ into a single vertex. In fact, the contraction of $B$ results in a $\partial{ (V(G_1))}$-contraction of $G$.

\iffalse
{\color{red}
An edge cut $C$ of $G$ is an {\em essential $GS$-cut} if the edge cut $C$ is a $GS$-cut of the graph gotten by the  contractions of some $C$-sheltered non-trivial barriers (see Figure \ref{f1}).
Specially,  a  $GS$-cut  is also an essential $GS$-cut. It should be note that if an  essential $GS$-cut $\partial (X)$ that is not a  $GS$-cut, then, by
contracting some $\partial (X)$-sheltered barriers, we get a graph $G'$, and $\partial (X\cap V(G'))$ is a $GS$-cut of $G'$. Denote by $S$ the set of vertices in $G'$ that gotten by the contraction of the $\partial (X)$-sheltered barriers, that is $S=V(G')\setminus V(G)$. Then every vertices in $S$ lie in some 2-separation in $ \mathcal{F}$, where $ \mathcal{F}$ is the set of 2-separation that $\partial (X)$ associated with.}
\fi

 Let $\partial(X)$ be an edge cut of $G$. Assume that ${\cal B}=\{B_1,B_2,\ldots,B_s\}$ is a set of $\partial(X)$-sheltered non-trivial barriers of $G$ such that $B_j\cap B_k=\emptyset$
for each pair of $j,k\in \{1,2\ldots,s\}$
	with $j\ne k$,
% for $j\ne k$ and $\{j,k\}\subset \{1,2\ldots,s\}$,
$B_i\subset X$ for all $i=1,2\ldots,t$ and $B_i\subset \overline{X}$ for  all $i=t+1,t+2\ldots,s$.
Let $G'$ denote the resulting graph
after contracting $B_i$  into a vertex $b_i$
for all $i=1,2,\cdots,s$,
and let
$X'=(X \setminus \cup_{i=1}^t B_i)
\cup \{b_i: 1\le i\le t\}$.
%\cup (\cup_{i=1}^t \{b_i\})\setminus (\cup_{i=1}^t B_i)$.
We say
the edge cut $\partial(X)$  is an {\em essential $GS$-cut} of $G$ if
$\partial(X')$ is a  $GS$-cut of $G'$ such that every vertex $b_i$ lie in some 2-separation $F_i$ of $G'$, where   $|F_i
\cap X'|=1$ (see Figure \ref{f1}). We say ${\cal B}$ is the set of barriers associated with $\partial(X)$.
Specially, if ${\cal B}=\emptyset$, then  an  essential $GS$-cut is a $GS$-cut.

Carvalho, Lucchesi and Murty \cite{CLM17} showed that in a matching covered graph with
brick number at most $2$,
every non-trivial tight
cut is either a barrier-cut or  an essential $GS$-cut.
 In this paper, we show
 that the result holds for all  matching covered graphs.

\begin{thm}\label{main}In a matching covered graph, every non-trivial tight cut is either a barrier-cut or an essential $GS$-cut.
\end{thm}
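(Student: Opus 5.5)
We argue by induction on $|V(G)|$, using Theorem~\ref{thm:lam} as the engine. Let $C=\partial(X)$ be a non-trivial tight cut of $G$ that is not a barrier-cut. By Theorem~\ref{thm:lam}, $\ELP_G(C)$ is non-empty. We distinguish two cases: $\ELP_G(C)$ contains a $C$-sheltered non-trivial barrier-cut, or it contains only 2-separation cuts laminar with $C$.

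\emph{Case 1: a $C$-sheltered non-trivial barrier $B$ exists.} Say $B\subset \overline{X}$. By Corollary~\ref{cor:connected}, $G[X]$ is connected, so $X\subseteq V(G_1)$ for some odd component $G_1$ of $G-B$. Contract $B$, that is, pass to $G'=G/(\overline{V(G_1)}\rightarrow b)$. This graph is matching covered and smaller than $G$, and $C$ is still a tight cut in $G'$.
\begin{itemize}
\item If $C$ becomes trivial in $G'$, then $X=V(G_1)$ (or its complement is $\{b\}$), and $C$ is a barrier-cut of $G$, contrary to assumption.
\item If $C$ is a barrier-cut of $G'$ with barrier $B'$, we lift $B'$ back to $G$. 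If $b\notin B'$ this is immediate. If $b\in B'$, we replace $b$ by $B$, and $(B'\setminus\{b\})\cup B$ is a barrier of $G$, since the odd-component counts add up. So $C$ would again be a barrier-cut of $G$, a contradiction.
\item Otherwise, by induction $C$ is an essential $GS$-cut of $G'$ with barrier family ${\cal B}'$. We adjoin $B$ to ${\cal B}'$.
\end{itemize}
The point to check in the last subcase is that the barriers in ${\cal B}'$ lift to $C$-sheltered barriers of $G$ disjoint from $B$. We must also check that $b$ lies in some 2-separation of the final contracted graph meeting $X'$ in one vertex. If $b$ lies in no such 2-separation, then $b$ is not incident with the cut in the $GS$ structure, and we simply do not contract $B$ at all. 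This requires knowing that the edges of $C$ avoid $b$'s neighbourhood, and that follows from the remark after the definition that every cut edge has an end in some $F\in{\cal F}$.

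\emph{Case 2: no $C$-sheltered non-trivial barrier exists, so $G$ has a 2-separation $\{u,v\}$ whose cut $D$ is laminar with $C$.} Because $G$ has no $C$-sheltered non-trivial barriers, we expect each 2-separation $F$ involved to satisfy $|F\cap X|=1$; otherwise $F$ itself, possibly enlarged, would be a $C$-sheltered barrier. Take the $D$-contractions $G_1,G_2$ of $G$; $C$ lives as a tight cut in the one containing it, say $G_1$, or it coincides with $D$.
\begin{itemize}
\item If $C=D$, we are done.
\item Otherwise, apply induction in $G_1$: $C$ is a barrier-cut or an essential $GS$-cut of $G_1$.
\end{itemize}
We then glue. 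The 2-separations of $G_1$ together with $\{u,v\}$ form the family ${\cal F}$ for $G$. We verify condition 1), the chain connectivity, through the common contraction vertex. We verify condition 2), the parity placement of odd and even components of $G[Y'\setminus(Y\cup F)]$, by a counting argument. Take a perfect matching through the unique vertex of $F\cap F'$ that uses the relevant component. Tightness of $C$ then forces the odd components onto the shore opposite that vertex, and forces each even component to be monochromatic. If $C$ is a barrier-cut of $G_1$, we show that its barrier either lifts to a barrier of $G$, a contradiction, or contains the contraction vertex in a way that yields a 2-separation structure.

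\textbf{Main obstacle.} The hardest step is the gluing in Case 2, in particular the interaction with barriers found in the contractions. A barrier of $G_1$ containing the contraction vertex need not lift to a barrier of $G$, and one has to show that it then produces exactly the essential $GS$ structure, with the contracted barriers each sitting in a 2-separation that meets $X'$ in one vertex. I would first try to choose $D$ extremal, namely with its shore not containing $C$ as small as possible. This should rule out most bad configurations, and what remains should be handled by the parity argument with perfect matchings described above.
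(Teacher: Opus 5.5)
\textbf{There is a genuine gap: the Case~2 gluing is never carried out.}

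Your overall architecture is the paper's: contract along a laminar ELP-cut, apply induction in the contraction, and lift back. The one difference is that you take the existence of that cut from Theorem~\ref{thm:lam}, whereas the paper proves it as its Claim~1. That substitution is legitimate for this statement, though it gives up the paper's point that Theorem~\ref{thm:lam} can be derived from Theorem~\ref{main}.

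The problem is Case~2. You flag the gluing as the main obstacle and only say what you would try, and the obstacle you describe does not exist. Write $D=\partial(Y)$ with $X\subseteq Y$, $u\in Y$, $v\in\overline Y$, and $G_1=G/(\overline Y\rightarrow t)$, so that $ut\in E(G_1)$.
\begin{itemize}
\item By Lemma~\ref{lem:2-sep}, a barrier of $G_1$ containing $t$ \emph{does} lift to a barrier of $G$, with $t$ replaced by $v$.
\item The barrier of a non-trivial barrier-cut is non-trivial and is automatically sheltered by that cut. Members of the barrier family of an essential $GS$-cut are sheltered by definition.
\item So every barrier that induction can produce in $G_1$ lifts to a non-trivial $C$-sheltered barrier of $G$. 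Your Case~2 hypothesis excludes exactly this.
\end{itemize}
Hence $C$, viewed in $G_1$, must be a plain $GS$-cut with some family $\mathcal{F}'$. What is missing is the transfer of that structure to $G$:
\begin{itemize}
\item If $u\notin X$, the structure carries over.
\item If $u\in X$, then $ut\in C$, so $u$ or $t$ lies in a member of $\mathcal{F}'$.
\item You must then exclude $u\in F_1$ and $t\in F_2$ with $F_1\neq F_2$. The paper does this via the chain guaranteed by condition~1: an intermediate member of the chain would separate the adjacent vertices $u$ and $t$.
\item Finally, check that $\mathcal{F}'$ with $t$ replaced by $v$, together with $\{u,v\}$, satisfies conditions 1) and 2) in $G$.
\end{itemize}
None of this appears in your proposal.

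The pieces you do sketch would not fill the gap:
\begin{itemize}
\item A 2-separation is never a barrier, since all components of $G-F$ are even. So ``$F$, possibly enlarged, would be a $C$-sheltered barrier'' proves nothing. Also, $\{u,v\}\subseteq\overline X$ is a perfectly legitimate configuration.
\item Condition~2) cannot be extracted from one perfect matching through the vertex of $F\cap F'$. Tightness only constrains how many edges of $C$ each perfect matching uses. Whether $C$ can pass through an even component is exactly the structural question the induction in $G_1$ must settle.
\item In Case~1, your reason for leaving $B$ uncontracted misreads the remark after the definition. That remark says every edge of $C$ has an end in some member of $\mathcal{F}$, not that $C$ avoids the neighbourhood of $b$. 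What actually needs checking is that the $GS$ conditions survive when $b$ is expanded back into $\overline{V(G_1)}$. The paper is terse here too, but your stated justification is incorrect.
\item A member $B''\in{\cal B}'$ with $b\in B''$ lifts to $(B''\setminus\{b\})\cup B$, which meets $B$. So you must merge it with $B$ rather than adjoin $B$ separately.
\end{itemize}
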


%an edge cut $C$ of a bicritical graph $G$ is a non-trivial tight cut but not 2-separation cut if and only if $C$ is an NS-cut (defined in Section 3).

\section{Preliminaries}

%A vertex set $S$ of a connected graph $G$ is called a $k$-vertex cut if $|S|=k$ and $G-S$ is disconnected.
In this section, we will present several useful results.
The following theorem
was established by Tutte.

\begin{thm}[\cite{Tutte47}]\label{thm:Tutte} A graph $G$ has a perfect matching if and only if $o(G-S)\le|S|$ for every $S\subseteq V(G)$.
\end{thm}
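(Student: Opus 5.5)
We prove the two directions separately. Necessity is a counting argument. Sufficiency goes by Lovász's edge-maximal counterexample argument, which uses only elementary matching facts and so does not depend on anything later in the paper.

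\emph{Necessity.} Suppose $G$ has a perfect matching $M$ and let $S\subseteq V(G)$. Each odd component $Q$ of $G-S$ cannot be perfectly matched within itself. Hence some vertex of $Q$ is matched by $M$ to a vertex outside $Q$, and that vertex must lie in $S$, since there are no edges between distinct components of $G-S$. Distinct odd components use distinct vertices of $S$ because $M$ is a matching. Therefore $o(G-S)\le |S|$.

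\emph{Sufficiency: reduction to an edge-maximal graph.} Assume $o(G-S)\le|S|$ for all $S$, but suppose $G$ has no perfect matching. Taking $S=\emptyset$ gives $o(G)=0$, so $|V(G)|$ is even. Adding an edge between two vertices of $V(G)$ never increases $o(G-S)$: a new edge either lies inside a component or merges two components, and merging two odd components lowers the count by two. So the Tutte condition is preserved under adding edges. We may therefore replace $G$ by an edge-maximal spanning supergraph $G^*$ (on the same vertex set, simple) that still has no perfect matching. $G^*$ satisfies the condition, and adding any further non-edge creates a perfect matching.

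\emph{Structure of $G^*$.} Let $S$ be the set of vertices adjacent to all other vertices of $G^*$. The key claim is that every component of $G^*-S$ is complete. Granting this, we construct a perfect matching directly. In each even component, pair the vertices arbitrarily. In each odd component, pair off all vertices but one, and match that leftover vertex to a distinct vertex of $S$; this is possible because $o(G^*-S)\le|S|$. The vertices of $S$ that remain form a clique. Their number is even, since $|V(G^*)|$ is even and all other vertices have been used in pairs, so they can be paired among themselves. This perfect matching contradicts the choice of $G^*$.

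\emph{Main obstacle: proving the claim.} Suppose some component of $G^*-S$ is not complete. Then it contains vertices $x,z$ with a common neighbour $y$ and $xz\notin E(G^*)$. Since $y\notin S$, there is a vertex $w$ with $yw\notin E(G^*)$. By maximality, $G^*+xz$ has a perfect matching $M_1$ containing $xz$, and $G^*+yw$ has a perfect matching $M_2$ containing $yw$. Consider the symmetric difference $M_1\triangle M_2$; it is a disjoint union of even alternating cycles, and one of them, $D$, contains $xz$.

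If $yw\notin D$, then $M_2\triangle D$ is a perfect matching of $G^*$ avoiding both new edges, a contradiction. Otherwise, traverse $D$ starting from $y$ along the $M_1$-edge at $y$, continuing until the first time we reach $x$ or $z$; say we reach $x$. Let $P$ be the path on $D$ from $y$ to $x$ that avoids $yw$, so $P$ also does not contain $xz$. Then $P+yx$ is an even cycle in $G^*$ that alternates with respect to $M_2$. Switching $M_2$ along it produces a perfect matching of $G^*$ that does not use $yw$, again a contradiction.

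The delicate points are the parity and alternation bookkeeping in this last step. We must check that $P$ begins with an $M_1$-edge at $y$ and ends with an $M_2$-edge at $x$, so that adding the edge $yx$ closes an $M_2$-alternating cycle. This is where we will be most careful.
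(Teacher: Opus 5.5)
The paper gives no proof of this theorem; it quotes it from Tutte. Your plan, Lovász's edge-maximal argument, is the standard one, and it is sound up to and including the construction of a perfect matching from the claim. The exchange step that proves the claim is wrong as written, however, in both cases. If $yw\notin D$, the set $M_2\triangle D$ keeps $yw$, since it is an $M_2$-edge not on $D$. It also picks up $xz$, an $M_1$-edge of $D$. So it contains \emph{both} new edges. You need $M_1\triangle D$ instead: it discards $xz$ and adds only $M_2$-edges of $D$, none of which is $yw$.

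The more serious problem is in the case $yw\in D$, where the parity is off. The $M_1$-edge at $x$ is $xz$, and $z$ has not yet been visited, so your path $P$ enters $x$ through the $M_2$-edge at $x$. Hence $P$ alternates $M_1,M_2,\dots,M_2$ and has \emph{even} length, and $P+yx$ is an odd cycle. Both of its edges at $y$ lie outside $M_2$, and $yw$ is not on it, so switching $M_2$ along it does not give a matching. For example, if $D=y\,a\,x\,z\,b\,w\,y$ with $M_1=\{ya,xz,bw\}$ and $M_2=\{ax,zb,wy\}$, then $P+yx$ is the triangle $y\,a\,x$. The condition you announced, that $P$ begins with an $M_1$-edge at $y$ and ends with an $M_2$-edge at $x$, is exactly the wrong combination. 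To close an $M_2$-alternating cycle with the non-$M_2$ edge $yx$, the path must begin \emph{and} end with $M_2$-edges.

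To fix this, traverse $D$ from $y$ along $yw$ instead, and stop at the first of $x,z$ reached, say $x$. Note that $w\notin\{x,z\}$ because $yw\notin E(G^*)$. The resulting path $Q$ has these properties:
\begin{itemize}
\item it starts with $yw$ and ends with the $M_2$-edge at $x$;
\item it avoids $xz$;
\item it has odd length.
\end{itemize}
So $Q+yx$ is an even $M_2$-alternating cycle. Switching $M_2$ along it removes $yw$ and adds $yx$ together with the $M_1$-edges of $Q$, all of which lie in $G^*$. This gives a perfect matching of $G^*$.

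Alternatively, keep your direction but continue $P$ through $xz$ and close with $zy$. Then $M_1\triangle E(P+xz+zy)$ is a perfect matching of $G^*$. With these two corrections the proof is complete.
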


 The following  proposition
 follows Theorem \ref{thm:Tutte} directly.

\begin{pro}\label{pro:B_in_mc}
     Let $G$ be a matching covered graph and let $B$ be a non-trivial barrier of $G$. Then $E(G[B])=\emptyset$ and $G-B$ contains no even components.
\end{pro}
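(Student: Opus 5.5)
The statement to prove is Proposition~\ref{pro:B_in_mc}: for a non-trivial barrier $B$ of a matching covered graph $G$, the set $B$ spans no edge, and every component of $G-B$ is odd. The plan is a counting argument with Theorem~\ref{thm:Tutte} and perfect matchings. The single key observation is that, for any perfect matching $M$ of $G$ and any barrier $B$, each odd component of $G-B$ has odd order. So $M$ has at least one edge leaving each such component, and that edge must end in $B$.

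First I fix a perfect matching $M$ (one exists because $G$ is matching covered). Since there are $o(G-B)=|B|$ odd components, each sending at least one $M$-edge into $B$, and each vertex of $B$ is covered exactly once by $M$, the counting is tight in two ways:
\begin{enumerate}[label=(\roman*)]
\item every vertex of $B$ is matched by $M$ into an odd component of $G-B$;
\item each odd component sends exactly one $M$-edge to $B$.
\end{enumerate}
Consequently no $M$-edge has both ends in $B$, and no $M$-edge joins $B$ to an even component of $G-B$.

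For $E(G[B])=\emptyset$, suppose an edge $e$ has both ends in $B$. Because $G$ is matching covered, $e$ lies in some perfect matching $M$. This contradicts (i), so $B$ is independent. (Non-triviality is not even needed here; if $|B|=1$ the claim is vacuous anyway.)

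For the absence of even components, suppose $D$ is an even component of $G-B$. Since $G$ is connected, $D$ is joined to the rest of the graph, and since $D$ is a component of $G-B$, some edge $e$ joins $D$ to $B$. Take a perfect matching $M$ containing $e$. Then the vertex of $B$ incident with $e$ is matched into $D$, which again contradicts (i). The one point needing care is connectivity when $B$ separates: this is guaranteed by the definition of matching covered, so I expect no real obstacle.
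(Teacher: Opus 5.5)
Your proof is correct and is essentially the paper's argument: the paper only remarks that the proposition follows directly from Tutte's theorem. Your count is exactly the easy direction of that theorem: the $|B|$ odd components each send an $M$-edge to a distinct vertex of $B$, so every vertex of $B$ is $M$-matched into an odd component. Combined with every edge lying in some perfect matching and with the connectivity of $G$, this yields both conclusions, and you are right that the non-triviality of $B$ is never actually used.
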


The following are some properties of tight cuts in matching covered graphs that will be used later.

\begin{pro}[\cite{ELP82}]
	\label{pro:ctight}
    Let $G$  be a matching covered graph and let $\partial(X)$  and   $\partial(Y)$ be two tight cuts such that $|X\cap Y| $ is odd.
    Then $\partial(X\cap Y)$ and $\partial(X\cup Y)$ are also tight cuts of $G$. Furthermore, no edge connects $X\cap \overline{Y}$ to $\overline{X}\cap Y$.
\end{pro}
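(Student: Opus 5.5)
The plan is a counting argument over an arbitrary perfect matching $M$ of $G$. It rests on the submodularity identity for edge cuts, applied to the edges of $M$, and then uses that $G$ is matching covered to pass from ``no edge of $M$'' to ``no edge of $G$''.

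\emph{Parities of the shores.} For any perfect matching $M$ and any $Z\subseteq V(G)$ we have $|M\cap\partial(Z)|\equiv |Z| \pmod 2$. This holds because every vertex of $Z$ is covered exactly once, and the vertices covered by edges of $M$ inside $G[Z]$ come in pairs. Since $G$ is matching covered it has a perfect matching, so the tight cuts $\partial(X)$ and $\partial(Y)$ force $|X|$ and $|Y|$ to be odd. Together with the hypothesis that $|X\cap Y|$ is odd, this gives the following parities:
\begin{itemize}
\item $|X\cap\overline{Y}|=|X|-|X\cap Y|$ and $|\overline{X}\cap Y|$ are even;
\item $|X\cup Y|=|X|+|Y|-|X\cap Y|$ is odd.
\end{itemize}
Hence $|M\cap\partial(X\cap Y)|$ and $|M\cap\partial(X\cup Y)|$ are both odd, so each is at least $1$.

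\emph{Submodularity identity.} For any edge set, in particular for $M$, counting each edge according to which of the four regions its ends lie in gives
\[
|M\cap\partial(X\cap Y)|+|M\cap\partial(X\cup Y)| = |M\cap\partial(X)|+|M\cap\partial(Y)| - 2\,|M\cap E[X\cap\overline{Y},\overline{X}\cap Y]| .
\]
Checking this is routine, edge type by edge type. Edges between $X\cap Y$ and $\overline{X}\cap\overline{Y}$ are counted twice on each side. Edges between $X\cap\overline{Y}$ and $\overline{X}\cap Y$ are counted twice on the right and zero times on the left. Every other edge type is counted equally on both sides.

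\emph{Conclusion.} The right-hand side equals $2-2|M\cap E[X\cap\overline{Y},\overline{X}\cap Y]|$, while the left-hand side is at least $2$. Therefore both terms on the left equal $1$, and $M$ contains no edge of $E[X\cap\overline{Y},\overline{X}\cap Y]$. Since $M$ was arbitrary, $\partial(X\cap Y)$ and $\partial(X\cup Y)$ are tight. Finally, if some edge $e$ joined $X\cap\overline{Y}$ to $\overline{X}\cap Y$, then because $G$ is matching covered there would be a perfect matching containing $e$, contradicting what we just proved. So no such edge exists.

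The only delicate point is the parity bookkeeping that ensures both new cuts meet every perfect matching at least once. Everything else is the standard uncrossing identity.
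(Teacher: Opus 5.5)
Your proof is correct and complete. The paper does not prove this proposition; it quotes it from Edmonds, Lov\'asz and Pulleyblank, and your argument is the standard proof: parity makes $|M\cap\partial(X\cap Y)|$ and $|M\cap\partial(X\cup Y)|$ odd, the uncrossing identity then forces both to equal $1$ with no $M$-edge between $X\cap\overline{Y}$ and $\overline{X}\cap Y$, and matching-coveredness rules out every such edge. One step you use only implicitly: that same parity gives $\partial(X\cup Y)\neq\emptyset$, so $X\cup Y$ is a proper subset of $V(G)$ and $\partial(X\cup Y)$ is a genuine edge cut.
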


\begin{pro}[\cite{Lovasz87}]
	\label{pro:also_tight}
    Let $G$ be a matching covered graph, and let $C$ be a tight cut of $G$. Then, both $C$-contractions are matching covered. Moreover, if $G'$ is a $C$-contraction of $G$, then a tight cut of $G'$ is also a tight cut of $G$. Conversely, if a tight cut of $G$ is an edge cut of $G'$, then it is also tight in $G'$.
\end{pro}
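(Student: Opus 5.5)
The last statement in the excerpt is Proposition~\ref{pro:also_tight}, and I would prove it by comparing perfect matchings of $G$ with perfect matchings of the two $C$-contractions. Write $C=\partial(X)$ and $G_1=G/(\overline{X}\rightarrow \overline{x})$.

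The first step is to show that every perfect matching of $G_1$ extends to a perfect matching of $G$. Let $M_1$ be a perfect matching of $G_1$. Exactly one edge $e$ of $M_1$ is incident with $\overline{x}$, so $e\in C$. Since $G$ is matching covered, $e$ lies in some perfect matching $M$ of $G$. Because $C$ is tight, $e$ is the only edge of $M$ in $C$. Hence $M\cap E(G[\overline{X}])$ together with $e$ covers $\overline{X}$. Then $M_1\cup (M\cap E(G[\overline{X}]))$ is a perfect matching of $G$.

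The second step is the converse: every perfect matching $M$ of $G$ meets $C$ in exactly one edge, so $M$ restricts to a perfect matching of $G_1$.

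These two steps give matching coveredness of $G_1$. Each edge $f$ of $G_1$ is an edge of $G$, so it lies in some perfect matching $M$ of $G$, and the restriction of $M$ is a perfect matching of $G_1$ containing $f$. Connectivity of $G_1$ is inherited from $G$ under contraction, and $G_1$ is non-trivial since it has at least two vertices. The same argument applies to $G/(X\rightarrow x)$.

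For the second assertion, let $D$ be a tight cut of $G_1$. As an edge set, $D=\partial_G(Y)$ where $Y$ is obtained from a shore of $D$ in $G_1$ by expanding $\overline{x}$ to $\overline{X}$ if it is present. Every perfect matching $M$ of $G$ restricts to a perfect matching of $G_1$ with the same edges in $D$, so $|M\cap D|=1$ and $D$ is tight in $G$.

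Conversely, suppose a tight cut $D$ of $G$ is also an edge cut of $G_1$. Each perfect matching of $G_1$ extends to a perfect matching $M$ of $G$ whose added edges lie in $G[\overline{X}]$, and these edges are not in $D$. So $|M_1\cap D|=|M\cap D|=1$, and $D$ is tight in $G_1$.

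I expect the main obstacle to be the extension step, where tightness of $C$ is used to guarantee that the completion inside $\overline{X}$ avoids $C$. The remaining steps are bookkeeping about how cuts correspond under contraction.
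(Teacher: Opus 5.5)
The paper gives no proof of this proposition; it quotes it from Lov\'asz. Your argument is correct and is the standard one. Tightness of $C$ makes every perfect matching of $G$ restrict to a perfect matching of $G/(\overline{X}\rightarrow\overline{x})$, and matching coveredness together with tightness lets every perfect matching of the contraction extend through $G[\overline{X}]$. These two facts give all three assertions.

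One step is left tacit. In the converse, the completing edges avoid $D$ because $D$, being an edge cut of $G_1$, satisfies $D\subseteq E(G_1)=E(G[X])\cup C$, so it contains no edge of $G[\overline{X}]$.
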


%For $X\subset V(G)$, denote by $G[X]$ the subgraph induced by $X$ and let $N(X)=\{y\in \overline{X}: xy\in E(G)~\mbox{and}~ x\in X\}$.

It is known that every matching covered graph with four or more vertices is 2-connected \cite{Lovasz86}. The following corollary can be derived by Proposition \ref{pro:also_tight} directly (see Corollary 1.6 in \cite{CFLLZ20} for example).

\begin{cor}
	\label{cor:connected}
    Let $G$ be a matching covered graph, and let $C=\partial(X)$ be a tight cut of $G$. Then, both $G[X]$ and $G[\overline{X}]$ are connected.
\end{cor}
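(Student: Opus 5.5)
We prove Corollary~\ref{cor:connected} by induction on $|V(G)|$, using Proposition~\ref{pro:also_tight} together with the fact that a matching covered graph on at least four vertices is 2-connected. By symmetry it suffices to show that $G[X]$ is connected. We may assume $|X|\ge 2$; since $|X|$ is odd (a tight cut meets every perfect matching in exactly one edge), this means $|X|\ge 3$.

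Let $G_1=G/(\overline{X}\rightarrow \overline{x})$. By Proposition~\ref{pro:also_tight}, $G_1$ is matching covered. It has $|X|+1\ge 4$ vertices, so it is 2-connected. Therefore $G_1-\overline{x}$ is connected. But $G_1-\overline{x}$ is exactly $G[X]$, because contracting $\overline{X}$ does not change edges with both ends in $X$. Hence $G[X]$ is connected.

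The same argument applied to $G/(X\rightarrow x)$ shows that $G[\overline{X}]$ is connected when $|\overline{X}|\ge 3$. When $|\overline{X}|=1$ the claim is trivial.

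No induction is actually needed, since the argument is direct. The only steps requiring care are that the contraction is matching covered, which is Proposition~\ref{pro:also_tight}, and that it has at least four vertices, which follows from the parity of the shore. If one wishes to avoid relying on the quoted 2-connectivity fact, it can be proved directly. Suppose a matching covered graph $H$ with $|V(H)|\ge4$ had a cut vertex $v$. Some component $K$ of $H-v$ is even, because the parities of the component orders of $H-v$ sum to $|V(H)|-1$, which is odd; so at least one component is odd. Now take an edge from $v$ into a component that makes the parity fail: every perfect matching matches $v$ into the unique odd component of $H-v$, so any edge from $v$ to another component lies in no perfect matching. Such an edge exists, since $H$ is connected and $H-v$ has at least two components, and the component parity pattern forces exactly one odd component. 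This contradicts $H$ being matching covered. This fallback is the only place where any real argument is needed.
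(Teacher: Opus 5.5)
Your main argument is correct and follows the paper's route: the $C$-contraction $G/(\overline{X}\rightarrow\overline{x})$ is matching covered by Proposition~\ref{pro:also_tight}, has $|X|+1\ge 4$ vertices, hence is 2-connected, and deleting $\overline{x}$ leaves exactly $G[X]$. One small slip is in your optional fallback: parity alone only shows that $H-v$ has an odd number of odd components, and it is Tutte's condition $o(H-v)\le 1$ (Theorem~\ref{thm:Tutte}) that makes the odd component unique and so forces an even component to exist (equivalently, each odd component must have a vertex matched to $v$).
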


\begin{lem}[\cite{CFLLZ20}]\label{lem:2-sep}
    Let $C:=\partial(X)$ be a 2-separation cut of a matching covered graph $G$, associated with a 2-separation $\{u_1,u_2\}$, where $u_1\in X$ and $u_2\in\overline{X}$,
    and let $S_H$ denote either a 2-separation or a barrier of the $C$-contraction $H:
    =G/(\overline{X}\rightarrow \overline{x})$
        of $G$    and let
    \begin{equation*}
    	S:=
        \begin{cases}
            S_H,&~\mbox{if}~\overline{x}\notin S_H; \\
            (S_H-\overline{x})+u_2,
            \quad &~\mbox{if}~\overline{x}\in S_H.
        \end{cases}
    \end{equation*}
    If $S_H$ is a barrier
    (resp. 2-separation)
    of $H$, then $S$ is a barrier
    (resp. 2-separation) of $G$.
    % and if $S_H$ is a 2-separation of $H$, then $S$ is a 2-separation of $G$.
\end{lem}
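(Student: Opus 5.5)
The plan is to compare the components of $H-S_H$ with those of $G-S$ directly, identifying $\overline{x}$ with $u_2$. Write $X=V(G_1)+u_1$ and $\overline{X}=V(G_2)+u_2$, where $G_1$ and $G_2$ are the two nonempty unions of components of $G-\{u_1,u_2\}$. Every component of $G-\{u_1,u_2\}$ is even, so $|V(G_2)|$ is even. Also $G$ has no edge between $V(G_1)$ and $V(G_2)$. Hence, under the identification $\overline{x}\leftrightarrow u_2$, $H$ is just $G[X\cup\{u_2\}]$, except that edges from $u_1$ into $V(G_2)$ become extra parallel edges $u_1\overline{x}$. Parallel edges affect neither components nor parities, so the whole question is how the vertices of $V(G_2)$ reattach when we pass from $H-S_H$ to $G-S$. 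The key invariant is that they only enlarge existing components by an even number of vertices, or form new even components.

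\emph{Case $\overline{x}\notin S_H$.} Then $S=S_H\subseteq X$. Let $K$ be the component of $H-S_H$ containing $\overline{x}$. In $G-S$ the vertices of $V(G_2)$ lie in the component containing $K-\overline{x}+u_2$. Indeed, $G[\overline{X}]$ is connected by Corollary~\ref{cor:connected}, since a 2-separation cut is tight. So this component is $K'=(K-\overline{x}+u_2)\cup V(G_2)$, and $|K'|\equiv|K|\pmod 2$. Every other component $Q$ of $H-S_H$ avoids $\overline{x}$, and I claim it is also a component of $G-S$. A vertex of $Q$ adjacent in $G$ to $V(G_2)$ would have to be $u_1$, since $V(G_1)$ has no neighbours in $V(G_2)$. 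But such a $u_1$ is adjacent to $\overline{x}$ in $H$, which forces $u_1\in K$. Thus the components of $G-S$ are in parity-preserving bijection with those of $H-S_H$.

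\emph{Case $\overline{x}\in S_H$.} Then $S=(S_H-\overline{x})+u_2$, and $H-S_H=G[V(G_1)+u_1]-S$. If $u_1\in S$, then the components of $G-S$ are those of $H-S_H$ together with the components of $G_2$, all of which are even. If $u_1\notin S$, then the components of $G_2$ adjacent to $u_1$ merge into the component of $H-S_H$ containing $u_1$, adding an even number of vertices. The remaining components of $G_2$ stay separate and are even. In either subcase the odd components of $G-S$ correspond exactly to the odd components of $H-S_H$. Distinct components of $H-S_H$ remain distinct in $G-S$, because $V(G_2)$ attaches to at most one of them.

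Conclusion: $o(G-S)=o(H-S_H)$, and $|S|=|S_H|$, so a barrier $S_H$ gives a barrier $S$. If $S_H$ is a 2-separation, then $|S|=2$. Moreover, $G-S$ has at least as many components as $H-S_H$, so it is disconnected. Every component of $G-S$ is an even component of $H-S_H$ enlarged by an even set, or a component of $G_2$; hence all are even and $S$ is a 2-separation. I expect the main point needing care to be the bookkeeping around $u_1$: showing that it never glues two distinct components of $H-S_H$ together through $G_2$. This is handled by the observation that any $G$-edge from $u_1$ into $V(G_2)$ appears in $H$ as an edge $u_1\overline{x}$.
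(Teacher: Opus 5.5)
Your proof is correct and complete. The paper gives no proof of this lemma itself; it is imported from \cite{CFLLZ20}, so there is no in-paper argument to compare your route against.

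Your direct comparison of the components of $H-S_H$ and $G-S$, under the identification $\overline{x}\leftrightarrow u_2$, is a natural self-contained proof. It uses only three facts:
\begin{enumerate}
\item $|V(G_2)|$ is even;
\item there are no edges between $V(G_1)$ and $V(G_2)$;
\item $G[\overline{X}]$ is connected, which you could equally get from the $2$-connectedness of $G$.
\end{enumerate}
In particular, your argument never needs $H$ to be matching covered or a barrier to be independent. So it also covers degenerate situations such as $S_H=\{\overline{x}\}$, or $u_1$ and $\overline{x}$ lying together in $S_H$, without special treatment.

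One point of wording in the case $\overline{x}\notin S_H$ should be tightened. You speak of ``the component containing $K-\overline{x}+u_2$'', but $K-\overline{x}+u_2$ need not be connected in $G$ on its own. An $H$-edge $u_1\overline{x}$ inside $K$ may come from a $G$-edge $u_1v$ with $v\in V(G_2)$ rather than from an edge $u_1u_2$. What is true, and all you need, consists of two facts:
\begin{itemize}
\item $K'=(K-\overline{x}+u_2)\cup V(G_2)$ is connected in $G-S$. To see this, replace each $H$-edge of $K$ at $\overline{x}$ by the corresponding $G$-edge into $\overline{X}$, and then use that $G[\overline{X}]$ is connected.
\item $K'$ is closed under taking neighbours in $G-S$. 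This follows from your argument that every other component $Q$ of $H-S_H$ is a component of $G-S$.
\end{itemize}
With that phrasing the bijection between components, and hence $o(G-S)=o(H-S_H)$ and the evenness claims, go through exactly as you wrote.
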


\iffalse
\begin{pro}\label{pro:also_tight}
    Let $B$ be a non-trivial barrier of   a matching covered graph $G$, $G_1$ be a non-trivial component of $G-B$ and
    $X\subset V(G_1)$. Let $G'=G/(V(G_1)\rightarrow t)$.
    If $\partial(X)$ is a barrier-cut of $G'$ associated with the barrier $B'$, then $(B\cup B')\setminus\{t\}$ is a barrier of $G$ such that $G[X]$ is a component of $G-(B\cup B_1)\setminus\{t\}$, or $B'$ is also a barrier of $G$ such that $G[\overline{X}]$ is a component of $G-B'$.
\end{pro}
\begin{proof}If $t\not B'$, then $t$ lie in some component of $G-B'$, say $G_1'$. As
$\partial(X)$ is a barrier-cut of $G'$, $V()$

\end{proof}
\fi
\section{Essential $GS$-cuts}

\begin{pro}\label{tight} Let $\{x,y\}$ be a $2$-separation of a matching covered graph $G$ with $xy\in E(G)$.
Assume that
$G_1$ and $ G_2$ are connected induced subgraphs of $G$
such that
$V(G)=V(G_1)\cup V(G_2)$
and $V(G_1)\cap V(G_2)=\{x,y\}$.
For $i=1,2$, let
$X_i$ be a
subset of $V(G_i)$
such that
$|X_i|$ is odd, $x\in X_i$ and $xy\in \partial (X_i)$.
Then, for $X=X_1\cup X_2$,
$\partial (X)$ is tight  in  $G$ if and only if $\partial (X_i)$ is tight in $G_i$
for both $i=1$ and $2$.
%and $\partial (X_2)$ is tight in $G_2$.
\end{pro}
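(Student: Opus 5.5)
The plan is to compare perfect matchings of $G$ with pairs of perfect matchings of $G_1$ and $G_2$, and then count edges in the cut on each side.

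\textbf{Structure of perfect matchings.} First I will describe how an arbitrary perfect matching $M$ of $G$ meets $G_1$ and $G_2$. Every edge of $G$ lies in $E(G_1)\cup E(G_2)$, and the only common edges are those joining $x$ and $y$. Each component of $G-\{x,y\}$ is even, so $|V(G_i)\setminus\{x,y\}|$ is even. Suppose $x$ were matched into $V(G_1)\setminus\{x,y\}$ and $y$ into $V(G_2)\setminus\{x,y\}$. Then the remaining vertices of $V(G_1)\setminus\{x,y\}$, an odd number, would have to be matched among themselves, which is impossible; the symmetric case fails likewise. Hence there are two possibilities. Either some $xy$-edge lies in $M$, or both $x$ and $y$ are matched into the same side, say $G_j$. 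In both cases $M=M_j\cup N$, where $M_j$ is a perfect matching of $G_j$ and $N$ is a perfect matching of $G_{3-j}-\{x,y\}$. Conversely, any such union is a perfect matching of $G$. Also, $N\cup\{xy\}$ is a perfect matching of $G_{3-j}$ containing $xy$, and every perfect matching of $G_{3-j}$ containing $xy$ arises this way.

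\textbf{The cut splits.} Next I will check that $X\cap V(G_i)=X_i$. Since $x\in X_i$ and $xy\in\partial(X_i)$, we have $y\notin X_i$, so $X_{3-i}$ adds no vertex of $V(G_i)$. It follows that $\partial_G(X)=\partial_{G_1}(X_1)\cup\partial_{G_2}(X_2)$, where the two parts overlap exactly in the $xy$-edges. For $M=M_j\cup N$ as above this gives
$$|M\cap\partial(X)|=|M_j\cap\partial_{G_j}(X_j)|+|N\cap\partial_{G_{3-j}}(X_{3-j})|,$$
and moreover $|(N\cup\{xy\})\cap\partial_{G_{3-j}}(X_{3-j})|=|N\cap\partial_{G_{3-j}}(X_{3-j})|+1$.

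\textbf{Sufficiency.} Assume both $\partial(X_i)$ are tight in $G_i$. Then every perfect matching $N$ of $G_{3-j}-\{x,y\}$ satisfies $|N\cap\partial(X_{3-j})|=0$, because $N\cup\{xy\}$ meets the tight cut $\partial(X_{3-j})$ exactly once, via $xy$. Together with $|M_j\cap\partial(X_j)|=1$, this gives $|M\cap\partial(X)|=1$ for every perfect matching $M$ of $G$.

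\textbf{Necessity.} Assume $\partial(X)$ is tight in $G$. Because $G$ is matching covered, some perfect matching $M_0$ of $G$ contains $xy$. Its restriction $L_j=M_0\cap E(G_j)$ is a perfect matching of $G_j$ containing $xy$ (for each $j$), and $M_0\setminus E(G_j)$ is a perfect matching of $G_{3-j}-\{x,y\}$.
\begin{itemize}
\item Let $N$ be any perfect matching of $G_{3-j}-\{x,y\}$. Then $L_j\cup N$ is a perfect matching of $G$. It already meets $\partial(X)$ in the edge $xy$, so tightness forces $|N\cap\partial(X_{3-j})|=0$. Hence every perfect matching of $G_{3-j}$ containing $xy$ meets $\partial(X_{3-j})$ exactly once.
\item Let $M_j$ be any perfect matching of $G_j$, and let $N$ be any perfect matching of $G_{3-j}-\{x,y\}$, for instance the one coming from $M_0$. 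Then $M_j\cup N$ is a perfect matching of $G$, so $1=|M_j\cap\partial(X_j)|+0$.
\end{itemize}
Doing this for $j=1,2$ shows that every perfect matching of $G_j$ meets $\partial(X_j)$ exactly once, so both cuts are tight. Note that this argument does not require the $G_i$ to be matching covered.

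\textbf{Main obstacle.} The only delicate point is the parity argument that rules out $x$ and $y$ being matched to different sides, together with the correct handling of multiple $xy$-edges. Each such edge lies in both cuts $\partial(X_1)$ and $\partial(X_2)$ and must be counted only once in $\partial(X)$, which is exactly what the displayed count does.
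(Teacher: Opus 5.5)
Your proof is correct and follows essentially the paper's route: split each perfect matching of $G$ along the 2-separation into perfect matchings of $G_1$ and $G_2$, adding $xy$ on the side that lacks it, and count cut edges using $\partial_G(X)=\partial_{G_1}(X_1)\cup\partial_{G_2}(X_2)$. Your necessity direction extends any perfect matching of $G_j$ by a perfect matching of $G_{3-j}-\{x,y\}$ taken from a perfect matching that contains $xy$. This spells out what the paper compresses into the remark $\partial(X_1)\subset\partial(X)$, and it also lets you skip the paper's preliminary step of showing that $G_1$ and $G_2$ are matching covered.
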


\begin{proof}
As $\{x,y\}$ is a 2-separation of $G$,
both $|V(G_1)|$ and $|V(G_2)|$
are even. Thus, it can be verified that
for any perfect matching $M$ of $G$,
$M\cap \partial(V(G_1)\setminus\{ y\})$ contains
either exactly an edge $xu$ for some $u\in V(G_2)\setminus \{x\}$,
or exactly an edge $yv$
for some $v\in V(G_1)\setminus \{y\}$.
It follows that
 $\partial(V(G_1)\setminus\{ y\})$ is a tight cut of $G$. So $G_1$ and $G_2$ are matching covered graphs  by Proposition \ref{pro:also_tight}.
 Note that $\partial (X_1)\subset \partial (X)$.
 Thus, if $\partial (X)$ is tight in $G$, then
 $\partial (X_1)$ is tight in $G_1$,
 and so is $\partial (X_2)$ in $G_2$.

Now we show the sufficiency.
As $|X_i|$ is odd for $i=1,2$, and $X_1\cap X_2=\{x\}$, $|X|$ is odd. Suppose to the  contrary that $\partial (X)$ is not tight,
that is $|M\cap \partial(X)|\ne 1$
for some perfect matching $M$ of $G$. As $|X|$ is odd,
$|M\cap \partial(X)|$ is odd, implying that  $|M\cap \partial(X)|\geq 3$.
If $xy\in M$, then $M\cap E(G_i)$ is a perfect matching of $G_i$ ($i=1,2$); let $M_i=M\cap E(G_i)$. If $xy\notin M$, assume that $\{xx_1,yy_1\}\subset M$. Then $\{x_1,y_1\}\subset V(G_1)$ or $\{x_1,y_1\}\subset V(G_2)$,  as $\{x,y\}$ is a 2-separation. Adjust notation so that  $\{x_1,y_1\}\subset V(G_1)$. Then $M\cap E(G_1)$ is a perfect matching of $G_1$, and $(M\cap E(G_2))\cup\{xy\}$ is a perfect matching of $G_2$;  let $M_1=M\cap E(G_1)$, $M_2=(M\cap E(G_2))\cup\{xy\}$.
Note that $\partial_G(X)=\partial_{G_1}(X_1)\cup  \partial_{G_2}(X_2)$. So at least one of $|M_1\cap\partial_{G_1}(X_1)|$ and $|M_2\cap\partial_{G_2}(X_2)|$ is great than 1. On the other hand, $\partial (X_1)$  and $\partial (X_2)$ are tight in $G_1$  and  $G_2$, respectively. This is a contradiction.
\end{proof}

\begin{pro}\label{two}
Every essential $GS$-cut
of a matching covered graph
is tight.
\end{pro}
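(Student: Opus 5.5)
Our plan is to reduce the statement in two stages: first strip away the barriers in ${\cal B}$, and then show that a $GS$-cut is tight by peeling off the 2-separations in ${\cal F}$ one at a time, using Proposition~\ref{tight} at each step.

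\textbf{Removing the barriers.} Let $\partial(X)$ be an essential $GS$-cut of $G$ with associated barriers ${\cal B}=\{B_1,\dots,B_s\}$, and let $G'$ and $X'$ be as in the definition. For each $B_i$, the graph $G[X]$ or $G[\overline{X}]$ not containing $B_i$ lies in a single odd component $Q_i$ of $G-B_i$. The barrier cut $\partial(V(Q_i))$ is tight, and contracting $B_i$ is exactly the $\partial(V(Q_i))$-contraction of $G$. Contracting the barriers one by one, each time in the current graph, we obtain $G'$ through a sequence of tight-cut contractions. By Proposition~\ref{pro:also_tight}, every intermediate graph is matching covered, and a tight cut of a contraction is tight in the larger graph. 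Since $\partial(X)$ corresponds to $\partial(X')$ in $G'$, it suffices to show that $\partial(X')$ is tight in $G'$. We therefore assume ${\cal B}=\emptyset$ and prove that every $GS$-cut is tight.

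\textbf{Peeling off an end-2-separation.} We argue by induction on $|{\cal F}|$, or on $|V(G)|$. If ${\cal F}=\emptyset$, then by the remark after the definition the cut $\partial(X)$ has no edges, which is impossible. Otherwise choose an end-2-separation $F=\{x,y\}$ of ${\cal F}$, with $x\in X$ and $y\in\overline X$, and a component $G_1$ of $G-F$ such that $V(G_1)\cup F$ contains no other member of ${\cal F}$. Condition 2) then gives $V(G_1)\subset X$ or $V(G_1)\subset \overline X$.

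Let $H_1$ and $H_2$ be the two sides $V(G_1)\cup F$ and $V(G)\setminus V(G_1)$, each with an added edge $xy$; adding $xy$ changes no tightness questions. The cut $\partial(V(G_1)\cup\{x\})$, or its counterpart with $y$, is a 2-separation cut. So in the proof of Proposition~\ref{tight}, both $H_1$ and $H_2$ are matching covered. Set $X_1=X\cap V(H_1)$ and $X_2=X\cap V(H_2)$.
\begin{itemize}
\item In $H_1$, the set $X_1$ is either $V(G_1)\cup\{x\}$ or $\{x\}$. In both cases $\partial(X_1)$ is trivial, or complementary to a trivial cut, and hence tight.
\item In $H_2$, we verify that $\partial(X_2)$ is again a $GS$-cut whose associated family is ${\cal F}\setminus\{F\}$, or a trivial cut.
\end{itemize}
Chain condition 1) survives the deletion of an end member. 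Condition 2) transfers because the components and odd/even pieces in $H_2$ are those of $G$ restricted to $H_2$. The parity requirement that $|X_i|$ be odd follows from condition 2): odd components go to the shore opposite the shared vertex. By induction $\partial(X_2)$ is tight in $H_2$, and Proposition~\ref{tight} gives that $\partial(X)$ is tight in $G$.

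\textbf{Main obstacle.} The hard part is the bookkeeping in the last step. First, the 2-separations in ${\cal F}\setminus\{F\}$ must remain 2-separations of $H_2$; Lemma~\ref{lem:2-sep} is the tool for this. Second, the odd-component placement in condition 2) must yield exactly $|X_i|$ odd with $x\in X_i$ and $xy\in\partial(X_i)$. If the direct verification gets messy, the fallback is to prove tightness directly: for a perfect matching $M$, track how $M$ crosses each $F\in{\cal F}$ along the chain, and use condition 2) to count the edges of $M\cap\partial(X)$ as exactly one.
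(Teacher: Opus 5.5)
Your proposal follows the paper's proof. You reduce an essential $GS$-cut to a $GS$-cut through tight barrier-cut contractions and Proposition~\ref{pro:also_tight}, then peel off an end-2-separation and combine a trivial cut on one side with the induction hypothesis on the other via Proposition~\ref{tight}; the paper inducts on the number of 2-separation cuts crossing the cut. Of your two induction parameters, use $|V(G)|$: when $G-F$ has three or more components, $F$ itself stays in the family associated with $\partial(X_2)$ in $H_2$, so $|\mathcal{F}|$ need not drop, and the oddness of $|X_2|$ follows from the tightness that induction supplies, so you need not derive it from condition 2).
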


\begin{proof}
	Let $G$ be a  matching covered  graph.
We first prove the following claim on $GS$-cuts.

\inclaim Every $GS$-cut of $G$
is tight in $G$.

Let $\partial(X)$ be
a $GS$-cut of a $G$
	and  $\mathcal{F}$ be the set of  2-separations of $G$ associated with $\partial(X)$.
For any  2-separation
	$F=\{x,y\}\in \mathcal{F}$,
we may assume that $xy\in E(G)$.
It is because if $\partial(X)$ is tight
in $G$ with $xy\in E(G)$,
then $\partial(X)$ is also tight in
$G-xy$ by the definition of a tight cut (it can be checked that  $G$ is matching covered if and only if $G-xy$ is matching covered).

We  denote the number of 2-separation cuts crossing $\partial(X)$ by $\lambda$.
 We will show the claim by induction on $\lambda$.
As
 a 2-separation results in at least two 2-separation cuts,
 we have $\lambda\geq 2$.
 If $ \lambda=2$,  assume that $D\in \mathcal{F}$.
 Then $\partial(X)$ is a 2-separation cut associated with $D$. So $\partial(X)$ is tight.
 We assume that Claim 1 holds whenever
  $\lambda<k$, where $k\ge 3$.
  Now we consider the case $\lambda=k$.

  Let $D_1$ be an end-2-separation in $ \mathcal{F}$ such that one  component of $G-D_1$, say $H_1$,
  satisfies the condition that
  $V(H_1)\cap F=\emptyset$
  for each  $F\in \mathcal{F}$.

  Then by the definition of a $GS$-cut, all the vertices of $V(H_1)$ lie in one shore of  $\partial(X)$.
  Therefore, $V(H_1)\cup \{x_1\}$ is a 2-separation cut of $G$, where $x_1\in D_1$, and $x_1$ and all the vertices of $V(H_1)$ lie in the same shore of  $\partial(X)$. Let $G_1=G/(V(H_1)\cup \{x_1\} ) $ and $G_2=G/(\overline{V(H_1)\cup \{x_1\}}) $. By the definition of a $GS$-cut, $ \partial(X\cap V(G_1) )$ is also a $GS$-cut of $G_1$.
  By inductive hypothesis, the  edge cut $ \partial(X\cap V(G_1) )$ is tight in $G_1$. Note that $ \partial(X\cap V(G_2) )$ is a trivial edge cut of $G_2$, which is tight.
   So Claim 1 follows from Proposition \ref{tight}.

Now we are going to complete the proof. Let $\partial(Y)$ be an essential $GS$-cut.
Then we may contract some $\partial(Y)$-sheltered non-trivial barriers to get a $GS$-cut $\partial(Y')$  of the resulting graph, say $G'$.
It is known that barrier-cuts are tight.  By Claim 1,
    $\partial(Y')$ is tight in $G'$.
    So  $\partial(Y)$ is a tight cut of $G$ by applying Proposition \ref{pro:also_tight} repeatedly.
\end{proof}

Now we are going to establish the main result in this section.

\begin{thm}\label{thm:lamnum}
Let $C$ be a non-trivial $GS$-cut of a matching covered graph $G$. %\red{whose brick number is at least two}.
%at least {\color{red}two bricks}.
Then, $|\ELP_G(C)|\ge 2$.
%$G$ has at least two $C$-sheltered non-trivial barriers or a 2-separation cuts which is laminar with $C$.
\end{thm}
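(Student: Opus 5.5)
We need at least two members of $\ELP_G(C)$ when $C=\partial(X)$ is a non-trivial $GS$-cut with associated family $\mathcal{F}$. These will be 2-separation cuts coming from the two "ends" of the chain structure of $\mathcal{F}$, and they must be shown to be laminar with $C$ and distinct.

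First I will use condition 1) of the definition. The 2-separations in $\mathcal{F}$ are linked by chains with consecutive members sharing exactly one vertex. Whenever $|\mathcal{F}|\ge 2$, I expect this structure to be path-like. Each $F_i=\{x_i,y_i\}$ separates $G$, and consecutive separations share a vertex. So the components of $G-F_i$ are nested as in condition 2): a component $Y$ of $G-F$ lies inside a component $Y'$ of $G-F'$.

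From this nesting I want two distinct end-2-separations $D_1$ and $D_2$ of $\mathcal{F}$, that is, two "extreme" members of the chain. Existence of one is implicitly used in the proof of Proposition~\ref{two}. For two, I would take a member $F$ and a component of $G-F$ maximal with respect to containing other members, and then argue symmetrically on the other side.

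Case $|\mathcal{F}|=1$. Here $C$ is essentially a 2-separation cut associated with $D=\{u,v\}$, by the second part of condition 2). Since $C$ is non-trivial, I need a second cut. The components of $G-D$ split into two nonempty groups $G_1,G_2$ with $C=\partial(V(G_1)+u)$. The cut $\partial(V(G_1)+u)$ is $C$ itself and is laminar with $C$. I still have to check whether the definition of $\ELP_G(C)$ allows $C$ itself; I will assume it does, since Theorem~\ref{thm:lam} would otherwise fail for 2-separation cuts. The cut $\partial(V(G_1)+v)$ differs from $C$ only in the placement of $u$ and $v$, so it is laminar with $C$. Its shores $V(G_1)\cup\{v\}$ and $V(G_2)\cup\{u\}$ each have at least three vertices because the $G_i$ are even and nonempty, so it is non-trivial. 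This gives two members.

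Case $|\mathcal{F}|\ge 2$. For an end-2-separation $D_1=\{x_1,w\}$, let $H_1$ be the component of $G-D_1$ with $V(H_1)\cup D_1$ containing no other member of $\mathcal{F}$. By condition 2), $V(H_1)$ lies in a single shore of $C$, say $X$. Because $|D_1\cap X|=1$, exactly one vertex of $D_1$, call it $a$, lies in $X$. Then $\partial(V(H_1)\cup\{a\})$ is a 2-separation cut, and its shore $V(H_1)\cup\{a\}$ is contained in $X$, so it is laminar with $C$. It is non-trivial since $H_1$ is a nonempty even graph.

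The same construction at $D_2$ gives a second cut with shore $V(H_2)\cup\{a'\}$. The two cuts are distinct because $H_1\ne H_2$. To see this, note that $V(H_1)\cup D_1$ contains no member of $\mathcal{F}$ other than $D_1$, while $D_2\ne D_1$ must lie outside $H_1$. So at most a coincidence of shores needs ruling out, and that coincidence would force $D_1=D_2$.

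The main obstacle is proving that $\mathcal{F}$ has two distinct end-2-separations when $|\mathcal{F}|\ge2$, with the corresponding $H_1$ and $H_2$ distinct. My approach is to take a longest sequence $F_1,\dots,F_s$ as in condition 1) and use the nesting $Y\subset Y'$ of components to show that both $F_1$ and $F_s$ are ends. If the $\mathcal{F}$-structure is tree-like rather than path-like, I would instead take two leaves of a spanning tree of the intersection graph on $\mathcal{F}$ and repeat the argument.
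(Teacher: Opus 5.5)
\textbf{Gap: the $|\mathcal{F}|=1$ case.} Counting $C$ itself is legitimate: identical cuts do not cross, and the paper counts a cut as laminar with itself. But your second member in the case $|\mathcal{F}|=1$ is wrong. For $C=\partial(V(G_1)\cup\{u\})$ and $D'=\partial(V(G_1)\cup\{v\})$, the four intersections of shores are $V(G_1)$, $\{u\}$, $\{v\}$ and $V(G_2)$. All are nonempty, so $D'$ \emph{crosses} $C$.

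More precisely, write $X=A_X\cup\{u\}$, where $A_X$ is the union of the components of $G-D$ inside $X$. Every 2-separation cut associated with $D$ has the form $\partial(A\cup\{u\})$ with $A$ a nonempty proper union of components. It is laminar with $C$ if and only if $A\subseteq A_X$ or $A_X\subseteq A$. So with at least three components of $G-D$ you do get a second member. With exactly two, $D$ yields only $C$, and nothing else in your argument supplies another cut.

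\textbf{The statement fails in that case.} Take the edge splicing of two copies of $K_4$ along $xy$: vertices $x,y,a,b,c,d$; edges $xy$, $ab$, $cd$ and all edges between $\{x,y\}$ and $\{a,b,c,d\}$. This graph is bicritical, so it has no non-trivial barrier, and it has brick number two. Its only 2-separation is $\{x,y\}$, whose two 2-separation cuts $\partial(\{a,b,x\})$ and $\partial(\{a,b,y\})$ cross. Hence for the non-trivial $GS$-cut $C=\partial(\{a,b,x\})$ we get $\ELP_G(C)=\{C\}$.

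The paper's proof has the same hole. In its sub-case where $\overline{V(G_1)}$ contains no member of $\mathcal{F}\setminus\{F_1\}$, the ``second'' cut has shore $\overline{V(G_1)\cup F_1}$ plus a vertex of $F_1$. When $G-F_1$ has two components, that cut is $C$ again, or crosses $C$. The theorem therefore needs an extra hypothesis, e.g.\ $|\mathcal{F}|\ge 2$, or at least three components of $G-F$ when $|\mathcal{F}|=1$.

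\textbf{The case $|\mathcal{F}|\ge 2$.} Here your plan coincides with the paper's. At two ``extreme'' members $F$ of $\mathcal{F}$, take a component $Y$ of $G-F$ such that $Y\cup F$ contains no other member, place $Y$ in one shore by condition 2), and add the vertex of $F$ lying in that shore.

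What you leave open is exactly the existence of two such members. The paper gets the second one as follows: take a component $G_2\ne G_1$ of $G-F_1$ such that $V(G_2)\cup F_1$ contains other members, then an end-2-separation $F_2\subseteq V(G_2)\cup F_1$ with a clean component $G_3$ of $G-F_2$. The paper asserts this rather than proving it.

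Your spanning-tree fallback does not give this. A leaf of a spanning tree of the intersection graph need not have a clean component. For example, if several members share a vertex $p$ and their other vertices are successive cut vertices of $G-p$, the intersection graph is complete, yet the middle members have no clean component.

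Finally, your distinctness argument only compares $H_1$ with $H_2$. Two cuts also coincide when their shores are complementary, which is precisely what goes wrong when $|\mathcal{F}|=1$. So distinctness needs an actual argument using $D_1\ne D_2$.
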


\begin{proof}
Let $C=\partial(X)$ be a
non-trivial $GS$-cut of $G$, where $X\subset V(G)$.
Assume that the brick number of
$G$ is at most one.
Then $C$ is a barrier-cut by Proposition 4.18 in \cite{lm2024}. This is a contradiction to the assumption that $C$ is a  $GS$-cut.

Now we assume that  the brick number of
$G$ is at least two, and   $\mathcal{F}$ is the set of  2-separations  of $G$ associated with $C$.
% each 2-separation.
By the definition of a  $GS$-cut,
there exists an end-2-separation $F_1$ of $\mathcal{F}$.
Then there is a component $G_1$
of $G-F_1$ such that
$V(G_1)\cup F_1$ contains no 2-separation in $ \mathcal{F}\setminus \{F_1\}$.
%where  $G_1$ is a component of $G-F_1$.
Then $V(G_1)$, together with a vertex in $F_1$,
forms one shore of a 2-separation cut
which is associated with $F_1$ and  laminar with $C$,
as all the vertices in $G_1$ lie in the same shore of $C$.

If $\overline{V(G_1)}$ contains no 2-separations in $ \mathcal{F}\setminus \{F_1\}$, then $\overline{V(G_1)\cup F_1}$, together with a vertex in $F_1$,  forms one shore of a 2-separation cut which is associated with $F_1$
and laminar with $C$, as either $\overline{V(G_1)\cup F_1}\subset X$ or $\overline{V(G_1)\cup F_1}\subset \overline{X}$.
So we assume that there exists a
 component $G_2$
of $G-F_1$ such that
 $G_1\neq G_2$,  and $V(G_2)\cup F_1$
contains some 2-separation in $ \mathcal{F}\setminus \{F_1\}$.
%where $G_2$ is a component of $G-F_1$ other than $G_1$.
Let $F_2$ be an end-2-separation of $\mathcal{F}$ with $F_2\subset (V(G_2)\cup F_1)$.
Then, there is a component
$G_3$ of $G-F_2$ such that
$V(G_3)\cup F_2$ contains no 2-separation in $ \mathcal{F}\setminus \{F_2\}$.

Observe that all the vertices of $G_3$ lie in the same shore of $C$ by the definition of a  $GS$-cut.
Therefore, $V(G_3)$, together with a vertex in $F_2$, forms one shore of a 2-separation cut
which is associated with $F_2$ and  laminar with $C$. So the result follows.
\end{proof}

We end this section with the following remarks.

{\bf Remarks}.
(i) The lower bound in Theorem \ref{thm:lamnum} is sharp. A graph $G$ with four or more vertices is {\em bicritical} if for any two distinct vertices $u$ and $v$ in $G$, $G-\{u, v\}$ has a perfect matching. It is known that every barrier in a bicritical graph is trivial (see Theorem 5.2.5 in \cite{Lovasz86} for example).
Let $\{x,y\}$ be a 2-separation of a matching covered graph $G$ with $xy\in E(G)$. Assume that
$G_1$ and $ G_2$ are connected induced subgraphs of $G$
such that
$V(G)=V(G_1)\cup V(G_2)$
and $V(G_1)\cap V(G_2)=\{x,y\}$. We also say $G$ is obtained by the {\em edge splicing} of $G_1$ and $ G_2$.
If $G_1$ and $ G_2$ are bicritical, then it can be checked that $G$ is
also bicritical.

Let $H_n$ be the graph obtained from two vertex-disjoint paths $v_1v_2\ldots v_{2n+1}$ and $u_1u_2\ldots u_{2n+1}$ by adding edges
in the following set:
\begin{eqnarray*}
& &\{v_iv_{i+2},v_iu_{i},v_iu_{i+2},u_iu_{i+2}:i=1,3,\ldots,2n-1 \}\\
&\cup & \{v_iu_{i+1}:i=1,2,\ldots,2n\}
\cup
\{v_{2n+1}u_{2n+1}\},
\end{eqnarray*}
as shown in Figure \ref{f2}. Note that
$\mathcal{F}=\{\{v_i,u_{i+2}\},\{v_{i+2},u_{i+2}\}:i=1,2,\ldots,2n-1\}$  is the set of  2-separations of $H_n$.
It can be checked that
$H_n$ can be obtained by edge splicing of $2n$ $K_4$ in some manner, and thus it is bicritical.
Therefore, every non-trivial ELP-cut of $H_n$
is a 2-separation cut associated with
some 2-separation in $\mathcal{F}$.
Note that $\partial(\{v_1,v_2,\ldots, v_{2n+1}\})$ is a $GS$-cut of $H_n$. Thus, $ \partial(u_1,u_2,u_{3})$ and $ \partial(v_{2n-1},v_{2n+1},v_{2n+1})$ are the only
ELP-cuts laminar with $\partial(\{v_1,v_2,\ldots, v_{2n+1}\})$.

\begin{figure}[h]
	\centering
	\includegraphics[width=6cm] {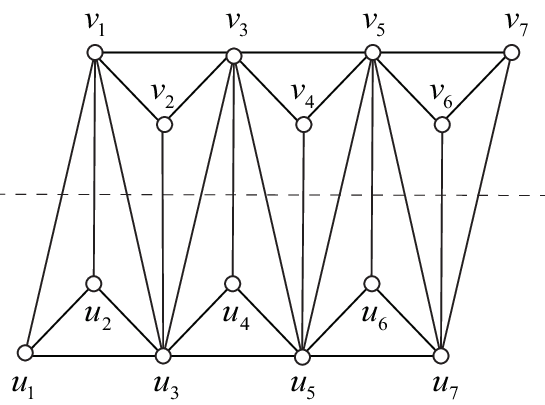}
	\caption{$H_3$}
	\label{f2}
\end{figure}

(ii)
For any essential $GS$-cut
$C=\partial (X)$ that is not a $GS$-cut,
if ${\cal B}$
%=\{B_1,B_2,\ldots,B_s\}$
is the set of  barriers of $G$ associated with $C$, then ${\cal B}\neq \emptyset$. As every barrier in ${\cal B}$ is $C$-sheltered, $|\ELP_G(C)|\geq 2$ when
$|{\cal B}|\geq 2$. If ${\cal B}=\{B_1\}$ and the vertex
produced by the contraction of $B_1$ lies in every 2-separation  $F$ with $|F\cap X|=1$, then $|\ELP_G(C)|$ may
equal to $1$.

Let $H_n' $ be the graph
obtained from two vertex-disjoint  paths $v_1v_2\ldots v_{2n+1}$($n\geq 4$, $n$ is even) and
$u_1u_0u_2$
 by adding edges
in the following set:
\begin{eqnarray*} & &\{v_iv_{i+2}:i=1,3,\ldots,2n-1 \}\cup \{u_1v_{2i}:i=2,4,\ldots,n\}
	\\
&\cup &
\{u_2v_{2i}:i=1,3,\ldots,n-1 \}\cup \{ u_1v_1,u_2v_{2n+1}\},
%u_0u_1,u_0u_2 \}.
\end{eqnarray*}
as shown in Figure
\ref{f4} for the case $n=4$.
In $H_n'$, the vertex set $\{u_1,u_2 \}$ is a barrier. By contracting $\{u_1,u_2 \}$ into a vertex $u$, we get a bicritial graph in which every 2-separation contains $u$. It can be checked that $\partial(\{v_1,v_2,v_3\})$ is an essential $GS$-cut, and $\partial(\{v_1,v_2,\ldots, v_{2n+1}\})$ is the only ELP-cut  laminar with $\partial(\{v_1,v_2,v_3\})$.

\begin{figure}[h]
	\centering
	\includegraphics[width=6cm] {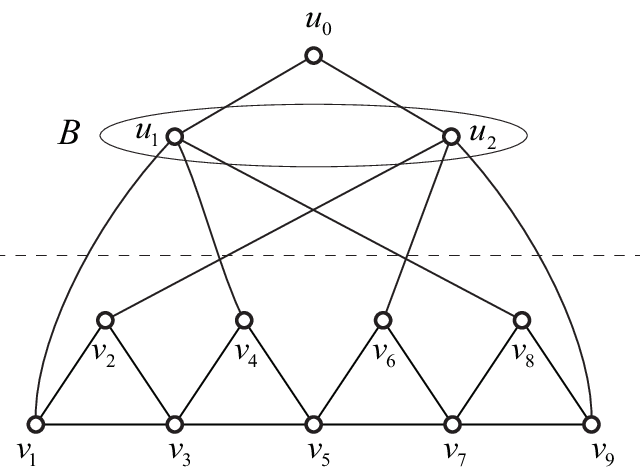}
	\caption{$H_4'$}
	\label{f4}
\end{figure}

\section{Proof of Theorem \ref{main}}

In this section, we will complete the proof of Theorem \ref{main}.

\begin{proof}Let $G$ be a matching covered graph and let
$C=\partial(X)$ be a non-trivial tight cut of $G$.
We will prove the theorem by induction on the number of non-trivial tight cuts in $G$.
If $G$ contains exactly one non-trivial tight cut,
then it is a  non-trivial barrier-cut by
Theorem \ref{thm:ELP-th} (a 2-separation will result in at least two   2-separation cuts). The result follows.
%, $C$ is a non-trivial barrier-cut or a 2-separation cut. The result holds.
Assume that the theorem holds for all the matching covered graphs with at most $k$ non-trivial tight cuts,
 where $k\ge 1$.
 Now we consider the case when $G$ contains exactly
$k+1$ non-trivial tight cuts. Assume that $C$ is not a non-trivial  barrier-cut. We will show that $C$ is  an essential $GS$-cut.

Let us first prove the following claim.

\noindent {\bf Claim  1}: $G$ has a $C$-sheltered non-trivial barrier-cut or a 2-separation cut that is laminar with $C$, that is $\ELP_G(C)\neq \emptyset$.

\begin{proof}Suppose, to the contrary, that for every non-trivial barrier $B$ of $G$, $B\cap X\neq \emptyset$ and $B\cap \overline{X}\neq \emptyset$, and for every 2-separation cut $D$ of $G$, $D$ crosses with $C$.

Firstly, we assume that $G$ is a bicritical graph. Let $F$ be a 2-separation of $G$ such that $F\cup Y$ contains no 2-separations of $G$ other than $F$, where $Y$ is an even component of  $G-F$.

We are now going to prove that
$Y\cap \overline{X}\neq \emptyset$.
Suppose that
$Y\cap \overline{X}= \emptyset$
(i.e., $Y\subset X$).
If $F\cap X\ne \emptyset$,
then
$Y$, together one vertex in $F\cap X$,  forms one shore of a 2-separation cut that is laminar with $C$, contradicting the assumption of $C$.
Thus, $F\cap X=\emptyset$,
implying that
$F\subset \overline{X}$.
By Corollary \ref{cor:connected}, $G[X]$ is connected. So $X=Y$. However, $|Y|$ is even, while $|X|$ is odd.
%Then,	the set $\overline{X}\setminus \{x_0\}$ for some vertex vertex $x_0$ in $F\cap \overline{X}$}
%forms one shore of a 2-separation cut that is laminar with $C$.
This is a contradiction too.
Therefore, $Y\cap \overline{X}\neq \emptyset$.

 Similarly,
 $Y\cap X\neq \emptyset$.
 Noting that both $G[\overline{X}]$ and $G[{X}]$ are connected by Corollary \ref{cor:connected}, we have
 $|F\cap \overline{X}|=1$.

Let $t\in F\cap \overline{X}$ and $Y'=Y\cup F$.
Then $\partial(\overline{Y'}\cup\{t\})$ is a 2-separation cut associated with $F$. Adjust notation such that $|X\cap (Y'\setminus \{t\})|$ is odd. So $\partial (X\cap (Y'\setminus \{t\}))$ is tight in $G$ by Proposition \ref{pro:ctight}.
Let $G'=G/((\overline{Y'}\cup\{t\})\rightarrow \overline{t})$. Then $\partial(X\cap V(G'))$ is also a  tight cut of $G'$ by Proposition \ref{pro:also_tight} (note that $X\cap V(G')=X\cap (Y'\setminus \{t\})$). As $Y\cap X\neq \emptyset$, $\partial(X\cap V(G'))$ is non-trivial.
By Theorem \ref{thm:ELP-th}, $G'$ contains a non-trivial ELP-cut $C'$.

If $C'$ is a barrier-cut associated with a non-trivial barrier $B'$ of $G'$, let $B''=B'$ if
$\overline{t}\notin B'$; otherwise, let $B''=(B'\cup \{t\})\setminus\{ \overline{t} \}$.
Then $B''$ is also a non-trivial barrier of $G$ by Proposition \ref{lem:2-sep}.
 This is a contradiction to the assumption that $G$ is bicritical,
 as a bicritical graph contains no
 non-trivial barriers (see Theorem 5.2.5 in \cite{Lovasz86} for example).
 It follows that $C'$ is not a barrier-cut, implying $C'$ is a 2-separation cut
  of $G'$. Then $C'$ is also a 2-separation cut of $G$ by Proposition \ref{lem:2-sep},
 contradicting the fact  that $F\cup Y$ contains no 2-separation of $G$ other than $F$ (obviously, the 2-separation that $C'$ is associated with is different from $F$).

Now we consider the case that $G$ is not bicritical.
Let $B$ be a maximal non-trivial barrier of $G$,
and let $G_1, G_2,\cdots, G_{a+b}$ be the components of $G-B$,
where $|X\cap V(G_i)|$ is odd for
all $i=1,2,\dots,a$
and $|{X}\cap V(G_i)|$ is even for
all $i=a+1,a+2,\dots,a+b$,
as shown in Figure~\ref{f3}.
Then $a+b=|B|$. Adjust notation so that $a\neq 0$.

\begin{figure}[h]
	\centering
	\includegraphics[width=12 cm] {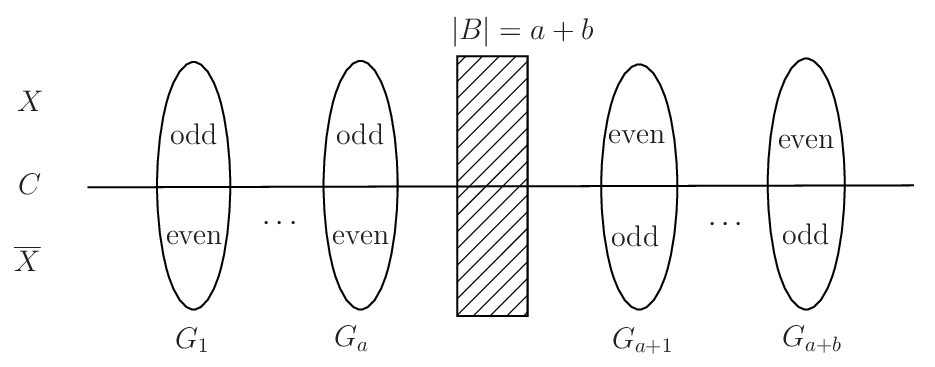}
	\caption{$B$ is a non-trivial barrier and $G_1, G_2,\cdots, G_{a+b}$ are the components of $G-B$}
	\label{f3}
\end{figure}

\noindent {\bf Claim 1.1}:
$\partial({X} \cap \left (\cup_{i=1}^{a} V(G_i) \right)) \cap C \ne  \emptyset$, $|B \cap X| = a -1$
and $|B \cap \overline{X}| = b+1$.

Suppose, to the contrary, that $\partial({X} \cap \left (\cup_{i=1}^{a} V(G_i) \right)) \cap C = \emptyset$.
%If $\partial({X} \cap \left (\cup_{i=1}^{a} \red{V(G_i)} \right)) \cap C = \emptyset$,
Then there exists an edge
$st\in E_G(X, \overline{X}\setminus B)$,
where $s\in X$ and
$t\in \overline{X}\setminus B$,
otherwise $G$ is not connected as
$B$ is an independent set by Proposition \ref{pro:B_in_mc}.
 %$E(G[B])=\emptyset$.
Let $M$ be a perfect matching of $G$ containing $st$.
Note that $s\in B$ or $s$ lies in some component of $G-B$, say $G_j$, where $j\in \{a+1,a+2,\dots,a+b\}$. As $B$ is a barrier of $G$,	$M$ contains exactly one edge in $E[V(G_i),B]$ for
	each $i=1,2,\dots, a+b$. If $s\in V(G_j)$, then $|E[V(G_j)\cap X, B\cap X]\cap M|=1$ as  $|V(G_j)\cap X|$ is even, $C\cap M=\{st\}$ and $C$ is tight;  if $s\in B$, then $E[V(G_j)\cap X, B\cap X]\cap M=\{st\}$.
Note that $\partial({X} \cap \left (\cup_{i=1}^{a} V(G_i) \right)) \cap C = \emptyset$
implies that
$(\cup_{i=1}^{a} V(G_i))\cap \overline{X}=\emptyset$.
As $G_i$ is an odd component of $G-B$ for $i=1,2,\dots,a+b$, and $B$ is a barrier of $G$,
we have $|E[V(G_i)\cap X, B\cap X]\cap M|=1$ for $i=1,2,\dots,a$, and $|E[V(G_i)\cap \overline{X},B\cap \overline{X}]\cap M|=1$ for each $i\in \{a+1,a+2,\dots, a+b\} $ and $i\neq j$.
So, $|B\cap X|=a+1$. Therefore, $B\cap X$ is a $C$-sheltered barrier of $G$.
By the assumption of $C$, $B\cap X$ is a trivial barrier, that is $a=0$. This is a contradiction to the assumption of $a$.

By the conclusion in the previous paragraph, we may assume that
%$\partial({X} \cap (\cup_{i=1}^{a} V(G_i))) \cap C \neq \emptyset$
$e_1 \in \partial(X \cap (\cup_{i=1}^{a} V(G_i))) \cap C$.	
	Let $M_1$ be a perfect matching of $G$ containing $e_1$.
	Adjust notation so that one end point of $e_1$ lies in $V(G_1)\cap X$. Then the other end point of $e_1$ lies in $V(G_1)\cap \overline{X}$ or $B\cap \overline{X}$.
	As $B$ is a barrier of $G$,
	$M_1$ contains exactly one edge in $E[V(G_i),B]$ for
	each $i=1,2,\dots, a+b$.
	As $e_1 \in M_1\cap C$ and $C$ is tight, $E[V(G_i),B]\cap M_1\cap C=\emptyset$ for $i=2,3,\dots, a+b$. Recalling that  $|V(G_i)\cap X| $  is odd  for all $i=2,3,\dots, a$ and $| V(G_i)\cap\overline{X}|$ is odd for
all $i=a+1,a+2,\dots,a+b$, we have
$|E[V(G_i)\cap X,B\cap X]\cap M_1|=1$ for $i=2,3,\dots, a$
	and  $|E[V(G_i)\cap \overline{X},B\cap \overline{X}]\cap M_1|=1$ for $i=a+1,a+2,\dots, a+b$.
 So $|B \cap X| = a -1$ or $a$. As $|V(G_1)\cap \overline{X}| $  is even,  $|E[V(G_1),B\cap \overline{X}]\cap M_1|=1$.
 Therefore, $|B \cap X| = a -1$, and thus $|B \cap \overline{X}| = b+1$. 	
Hence Claim 1.1 holds.

%\red{We are now going to show that $b=0$.}Suppose that $b>0$,and

By replacing $X$ by $\overline{X}$
in the first paragraph of the proof of Claim 1.1,
%we have the conclusion that
$\partial(\overline{X} \cap (\bigcup_{i = a+1}^{a+b} V(G_i)))\cap C\neq \emptyset$ or $b=0$.
Now we are going to the following claim.

\noindent {\bf Claim 1.2}:
$b=0$.

Suppose that $b\neq 0$.
Since $\partial(\overline{X} \cap (\bigcup_{i = a+1}^{a+b} V(G_i)))\cap C\neq \emptyset$,
assume, without loss of generality, that $e_2 \in \partial(\overline{X} \cap V(G_{a+1}))\cap C$.	
	Let $M_2$ be a perfect matching containing $e_2$. As $C$ is a tight cut, $e_2\in C$ and $|V(G_{a+1})\cap X|$ is even,
$|E[V(G_{a+1}), B\cap X]\cap M_2|=1$. For all $i=1,2,\ldots,a$, as  $|V(G_{i})\cap X|$ is odd, $B$ is a barrier and
$M_2\cap C=\{e_2\}$,
 we have $|E[V(G_i), B\cap {X}]\cap M_2|=1$. Then $|{X} \cap B| \geq a+1$. Therefore,  $|\overline{X} \cap B| \leq  b-1$.
	This is a contradiction to Claim 1.1.
	Hence $b=0$ and Claim 1.2 holds.

\iffalse
	for all $i=a+2,a+3,\ldots,a+b$. So $|E[V(G_i), B\cap \overline{X}]\cap M_2|=1$
	for all $i=a+2,a+3,\ldots,a+b$. Similarly, $|E[V(G_i), B\cap {X}]\cap M_2|=1$
	for all $i=1,2,\ldots,a$.

	If $\partial(\overline{X} \cap (\bigcup_{i = a+1}^{a+b} V(G_i)))\cap C = \emptyset$, then each $G_i(i=a+1,a+2,\dots,a+b)$ is a component of
  $G - (\overline{X} \cap B)$.
    Therefore, $\overline{X} \cap B$ is a barrier of $G$ that is $C$-sheltered. By assumption, $\overline{X} \cap B$ is a trivial barrier,  that is $ |B\cap \overline{X}|=1$. So $b = 0$.
    \fi

\noindent{\bf Claim 1.3}: There exists
$i$ with $1\le i\le a$ such that
$\partial(X \cap V(G_i))$ is  a
non-trivial tight cut of $G/(\overline{V(G_i)})$.

By Claims 1.1 and 1.2, $|B\cap \overline{X}|=1$.
Assume that
$y$ is the only member in
$\overline{X}\cap B $.
	If $V(G_i) \cap \overline{X} = \emptyset$
	for every $i = 1,\cdots, a$,
	then $b=0$ yields that $\overline{X}=\overline{X}\cap B$, implying that
	$C$ is a trivial tight cut, contradicting the assumption.
	So there exists $i$ with $1\le i\le a$
	such that $V(G_i) \cap \overline{X} \neq \emptyset$,
		say $i=1$.  As $\partial(V(G_1))$ and $\partial(X)$ are  tight cuts of $G$, and $|X \cap V(G_1)|$ is odd, $\partial(X \cap V(G_1))$ is a tight cut of $G$ and $E[V(G_1)\cap \overline{X}, \overline{V(G_1)}\cap {X}]=\emptyset$ by Proposition \ref{pro:ctight}. Let $G'=G/(\overline{V(G_1)})$.
	Then
$\partial(X \cap V(G_1))$ is a tight cut of $G'$ by Proposition \ref{pro:also_tight}.	
	If $|X \cap V(G_1)| = 1$, assume that $x \in X \cap V(G_1)$, then $N_G(V(G_1)\cap \overline{X})=\{x,y\}$, that is $\{x,y\}$ is a $2$-separation of $G$ (note that $E[V(G_1)\cap \overline{X}, B\cap {X}]=E[V(G_1)\cap \overline{X}, \overline{V(G_1)}\cap {X}]=\emptyset$). Therefore, $\partial ((V(G_1)\cap \overline{X})\cup \{y\})$ is a  $2$-separation cut associated with $\{x,y\}$ which is laminar with $C$,  contradicting the assumption of $C$.
	Thus, $|X \cap V(G_1)| > 1$,
	implying $\partial(X \cap V(G_1))$ is  a non-trivial tight cut of $G'$.
	Claim 1.3 holds.
	
	By applying Claim 1.3,  adjust notation such
	that $\partial(X \cap V(G_1))$ is  a
	non-trivial tight cut of $G':=G/(\overline{V(G_1)}\rightarrow g)$. Then $g\in N_{G'}(X)$.
	
	\noindent {\bf Claim 1.4}:
		$\partial(X \cap V(G_1))$ is
		not a barrier-cut of $G'$. %associated with any barrier $B'$.

	Suppose that
	$\partial(X \cap V(G_1))$ is a barrier-cut of $G'$ associated with the non-trivial barrier $B'$ of $G'$.
If $g\notin B'$, then $g$ lies in some component of $G-B'$, say $G_1'$. As $G[X]$ is connected (by Corollary \ref{cor:connected}), $g\in N_{G'}(X)$.
As
$\partial(X)$ is a barrier-cut of $G'$, $V(G_1')=V(G')\setminus {X}$ and $B'\subset X$.
Then $B'$ is also a $C$-sheltered barrier of  $G$ such that $G[(V(G'_1)\setminus \{g\})\cup (V(G)\setminus V(G_1))]$ is a component of $G-B'$, contracting the assumption of $C$. So $g\in B'$.
 It can be checked that   $(B'\setminus\{g\})\cup B $ is a barrier of $G$,
	contradicting the fact that $B$ is maximal  barrier of $G$.
	Hence Claim 1.4 holds.
	
	Now we are going to complete the proof of Claim 1.
	
Since $G'$ has less tight cuts than $G$,
by induction and Claim 1.4, $\partial(X \cap V(G_1))$ is an essential $GS$-cut of $G'$.
	If $\partial(X \cap V(G_1))$ is not a $GS$-cut of $G'$, then there exists some non-trivial barrier of $G'$ that is $\partial(X \cap V(G_1))$-sheltered. And it is also a $C$-sheltered barrier of $G$, contradicting the assumption. So $\partial(X \cap V(G_1))$ is  a $GS$-cut of $G'$. Assume that $ \mathcal{F}'$ is
the set of  2-separations  of $G'$ associated with $\partial(X \cap V(G_1))$.
	Then, by the proof of Theorem~\ref{thm:lamnum},
		there exists two $2$-separations $F'$ and $F''$ in $ \mathcal{F}'$ such that  both $\partial (Y')$ and $\partial (Y'')$ are laminar with $\partial(X \cap V(G_1))$,
		where  $\partial (Y')$ and $\partial (Y'')$  are $2$-separation cuts associated with $F'$ and $F''$ respectively,
	such that
		$(Y'\setminus F')\cap (Y''\setminus F'')=\emptyset$
		and
		$Y'\setminus F'$ (resp. $Y''\setminus F''$) is an even component of $G'-F'$ (resp. $G'-F''$).

If $g\notin F'$, then both vertices in $F'$  are also vertices of $G$,
	implying that $g\in Y'\setminus F'$, otherwise $\partial (Y')$ is a $2$-separation cut of $G$ laminar with $C$ in $G$, contradicting  the assumption of $C$.  Then $F''\cup Y''\subset V(G)$, and thus
$\partial (Y'')$ is a $2$-separation cut of $G$ laminar with $C$ in $G$, contradicting  the assumption of $C$ again.
So, $g\in F'$. With the same reason,  we have $g\in F''$.
Therefore, every $2$-separation $F_1$ in $ \mathcal{F}'$,  satisfying  one of $2$-separation cut associated with $F_1$ is laminar with $\partial(X \cap V(G'))$,
 contains the vertex $g$.
 Note that every end-2-separation of $ \mathcal{F}'$  results in a $2$-separation cut laminar with $\partial(X \cap V(G'))$
    by the proof of Theorem~\ref{thm:lamnum}.
 By the structure of the $GS$-cut, it follows that either
$\overline{X}\cap V(G')=\{g\}$
or $V(H)\subseteq \overline{X}$
for some even component $H$ of $G'-F_2$, where $F_2\in \mathcal{F}'$. % lie in $\overline{X}$.
However, if $\overline{X}\cap V(G')=\{g\}$, then
$|\overline{X}\cap V(G)|=1$
and $C$ is a trivial cut in $G$,
a contradiction.
Then, $V(H)\subseteq \overline{X}$,
implying that
$\partial(V(H)\cup \{g\})$ is a 2-separation cut of $G'$.

Recalling that both $\partial (X)$ and $\partial (V(G_1))$ are tight in $G$ and $|X\cap V(G_1)|$ is odd,
and
by Proposition \ref{pro:ctight},
$E[X\cap \overline{V(G_1)}, \overline{X}\cap V(G_1)]=\emptyset$.
%no edge connects $X\cap \overline{V(G_1)}$ to  $\overline{X}\cap V(G_1)$.
As $V(H)\subset \overline{X}\cap V(G_1)$
and $X\cap B\subset X\cap \overline{V(G_1)}$,
$E[X\cap B, V(H)]=\emptyset$. So
 $N_G(V(H))\cap X=F_2\setminus\{g\}$. Therefore,
 $(F_2\setminus\{g\})\cup\{ y\}$ is a 2-separation of $G$ such that $H$ is a component of $G- (F_2\cup\{ y\})\setminus\{g\}$.

Hence
 $\partial(V(H)\cup \{y\})$ is a 2-separation cut of $G$ that laminar with $C$. This is a contradiction
to the assumption given in the beginning of the proof in Claim 1.
\end{proof}

%Finally, we apply Claim 1 to complete the proof of the theorem.
By Claim 1,
$G$ has a $C$-sheltered non-trivial barrier or a 2-separation cut that is laminar with $C$.
We will prove the theorem holds
in both cases.

\noindent {\bf Case 1.} There exists a $C$-sheltered non-trivial barrier $B$.

Adjust notation so that $G_1$ is  a non-trivial component of  $G-B$ with $X\subset V(G_1)$. Let $G'=G/(\overline{V(G_1)}\rightarrow t)$. Then
$C$ is also a tight cut of $G'$ by Proposition \ref{pro:also_tight}. As $C$ is not a barrier-cut of $G$, the edge cut $C$ is non-trivial in $G'$.
If $C$ is a barrier-cut of $G'$, let $B'$ be the barrier of $G'$
associated with $C$. If $t\in B'$, then $G'[X]$ is a component of $G'-B'$ as $C$ is a barrier-cut of $G'$ and $t\notin X$. It can be checked that $(B\cup B')\setminus \{ t\}$ is also a barrier of $G$ such that $G[X]$ is a component of $G-(B\cup B')\setminus \{ t\}$. And thus  $C$ is a barrier-cut of $G$, contradicting the assumption on $C$. Therefore,   $t\notin B'$.
 Then $t$ lies in some component of $G'-B'$, say $G_1'$.
Recalling that $C$ is  a tight cut of $G'$, $G[X]$ is a component of $G'-B'$, or $V(G_1')=V(G')\setminus X$ (in this case, $B'\subset X$). Then  $B'$ is also a barrier of $G$
 with the property that
 	$G[X]$ is a component of $G'-B'$, or $G[V(G)\setminus X]$ is a component of $G-B'$. So $\partial(X)$ is also a barrier-cut of $G$,
  contradicting the assumption on $C$ again.
Hence $C$ is an essential $GS$-cut of $G'$.
It can be further verified that
no matter whether
$t$ lies  in a 2-separation $F$ of $G'$ satisfying $|F\cap X|=1$,
$C$ is an essential $GS$-cut of $G$ by its definition.

\noindent {\bf Case 2.} There exists a 2-separation cut $D$ of $G$  that is laminar with $C$.

Adjust notation so that $X\subset Y$, where $Y$ is one shore of $D$.
%where $D=\partial (Y)$.
Let
 $\{t_1,t_2\}$ be the 2-separation of $G$ that $D$ associated with, and let  $t_1\in Y$.
Let $G'=G/(\overline{Y}\rightarrow t)$. Then $t_1\in V(G')$ and $tt_1\in E(G')$.  Let $C'=\partial(X\cap V(G'))$. Then  $C'$ is a tight cut of $G'$ by Proposition \ref{pro:also_tight}.

\noindent {\bf Case 2.1.}
$C'$ is a barrier-cut of $G'$.

In this subcase,  let $B'$ be the barrier of $G'$
associated with $C'$. As $tt_1\in E(G')$,
at most one vertex in $\{t,t_1\}$
lies in $B'$ by Proposition \ref{pro:B_in_mc}.
As $C$ is not a barrier-cut of $G$,
either $t\in B'$ or $t_1\in B'$ (otherwise both $t$ and $t_1$ lie in some component of $G'-B'$, and
$B'$ is a barrier of $G$ such that $G[X]$ or $G[V(G)\setminus X]$ is a component of $G-B'$,
which implies that $C$ is  a barrier-cut of $G$).

If $t\in B'$, then  $G'[X]$ is a component of $G'-B'$ as $C'$ is a barrier-cut and $t\notin X$. And
 $(B'\cup \{t_2\})\setminus\{t\}$ is also a barrier of $G$. If $t_1\notin X$, then $G[X]$ is a component of $G-((B'\cup \{t_2\})\setminus\{t\})$, implying that $C$ is also a barrier-cut of $G$, contradicting the assumption of $C$. So, $t_1\in X$.
 Then $G[(X\cup \overline{Y})\setminus\{t_2\}] $ is a component of  $G-((B'\cup \{t_2\})\setminus\{t\})$.
Let $G''=G/(((\{t_2\}\cup Y)\setminus  X)\rightarrow s)$. Note that $G''$ also can be gotten from $G$ by contracting  the barrier $(B'\cup \{t_2\})\setminus\{t\}$ into the vertex $s$.
  Then $\{s,t_1\}$ is a 2-separation
  cut of $G''$ which partitions
  $G''- \{s,t_1\}$
  into two graphs with vertex sets $X\setminus \{t_1\}$ and $\overline{Y}\setminus \{t_2\}$,
 respectively.
  Therefore,  $C$ is an essential $GS$-cut of $G$, as $(B'\cup \{t_2\})\setminus\{t\}$ is a $C$-sheltered  barrier of $G$.

  %\noindent {\bf Case 2.2}: $t_1\in B'$. $\partial(Y\setminus (t_2\cup X))$ is a
If $t_1\in B'$, then $t$ lies in some component of $G'-B'$, say $G_1'$.
 %As $X\subset Y$, the subgraph $G'[V(G')\setminus X]$ is a component of $G'-B'$ and $t\in V(G')\setminus X$.
 So, $B'$ is also a barrier of $G$ such that $G[(V(G_1')\cup \overline{Y})\setminus \{t\}]$ is an odd component of  $G-B'$.
 Note that $C'$ is a barrier-cut of $G'$.
 If $G'[X]$ is an odd component of  $G'-B'$, then $G[X]$ is also an odd component of  $G-B'$ (note that $t\notin X$); if $G'[V(G')\setminus{X}]$ is an odd component of  $G'-B'$ (in this case, $t_1\in X$), then $G[V(G)\setminus{X}]$ is also an odd component of  $G-B'$ (in this case, $V(G)\setminus{X}=(V(G')\cup \overline{Y})\setminus ({X}\cup\{t\})$).
 Therefore, $C$ is a barrier-cut of $G$, contradicting the assumption on $C$.
  Hence the theorem holds
  in Case 2.1.

  \noindent {\bf Case 2.2.}
  $C'$ is not a barrier-cut of $G'$.

In this subcase,  $C'$ is an essential $GS$-cut  of $G'$ by induction. As $X\subset Y$, $t\notin X$ (and $t_2\notin X$).
 If  $\{t_1,t\}\subset \overline{X}$,
 then $C'$ is also an essential $GS$-cut  of $G$. So we assume that $t_1\in X$.  As $tt_1\in E(G')$, $tt_1\in C'$. We firstly assume that  $C'$ is   a  $GS$-cut  of $G'$.
Let $\mathcal{F}$ be
the set of  2-separations  of $G'$
associated with $C'$. Note that for every edge $e$ in $C'$, at least one end point of $e$ lie in some 2-separation of $\mathcal{F}$. As $tt_1\in C'$, at least one of $t$ and $t_1$ lie in some 2-separation of $\mathcal{F}$. In the following, we may assume
that $t_1\in F_1$ for some
$F_1\in \mathcal{F}$.

 \noindent {\bf Case 2.2.1.}
 $\{t_1,t\}\in  \mathcal{F}$,
 or
$t_1\in F_1$ for some $F_1\in \mathcal{F}$ but
$t\notin F$ for any $F\in
\mathcal{F}$.

In this subcase, as $X\subset Y$, $\overline{Y}\subset \overline{X}$. Then the edge cut
$C$ is a $GS$-cut  of $G$ and    $\mathcal{F}\cup \{t_1,t_2\}$ is the set of 2-separations associated with $C$.

\iffalse
 If exactly one of  $t_1$ and $t$  lie in some 2-separation in $ \mathcal{F}$, or $D$ is also a 2-separation in $ \mathcal{F}$, then $C$ is an essential $GS$-cut  of $G$ associated with $\mathcal{F}\cup D$, as $D$ is laminar with $C$.
 \fi

\noindent {\bf Case 2.2.2.}
$t_1\in F_1$ for some $F_1\in \mathcal{F}$ but
$t\in F_2$ for some $F_2\in
\mathcal{F}$,
where $F_1\ne F_2$.

 %Now we consider the case when $t_1\in F_1$ and $t_2\in F_2$ where $ \{F_1, F_2\}\subset \mathcal{F}$.

 In this subcase,
 there exists a set of  2-separations $\mathcal{F'}=\{F'_1,F'_2,\dots, F'_s\}\subseteq \mathcal{F}$
such that  $F_1=F'_1$, $F_2=F'_s$  and $|F_{i}\cap F_{i+1}|=1$ for $i=1,2,\dots,s-1$.
Since $t_1$ and $t$ lie in different shores of $C$, we have $s\geq 3$. Assume that $F_3\in \mathcal{F'}\setminus\{F_1,F_2\}$. Then $t_1$ and $t$ lie in different components of $G'-F_3$. However, $t_1t\in E(G')$. This is a contradiction. So the result holds
in this subcase.

Now we assume that $C'$ is   an essential  $GS$-cut   that is not a $GS$-cut of $G'$. Assume  that ${\cal B}=\{B_1,B_2,\ldots,B_s\}$ is the set of  barriers of $G'$ associated with $C'$,  $B_i\subset X$ for all $i=1,2\ldots,t$ and $B_i\subset \overline{X}$ for  all $i=t+1,t+2\ldots,s$. For $i=1,2\ldots,s$, we contract each barrier $B_i$  into a vertex $b_i$ respectively, denoted the resulting graph by $G''$. Let $X''=(X \setminus \cup_{i=1}^t B_i)
\cup \{b_i: 1\le i\le t\}$.
If $t_1$ lies in some barrier in ${\cal B}$, say $B_1$, then let $t_1''=b_1$, otherwise, let $t_1''=t_1$. Similarly, if $t$ lies in some barrier in ${\cal B}$, say $B_2$, then let $t''=b_2$, otherwise, let $t''=t$.
 By replacing  $\{t,t_1\}$ by $\{t'',t_1''\}$ in the Case 2.2 when $C'$ is   a $GS$-cut of $G'$, it can be shown that $\partial (X'')$ is a $GS$-cut of $H$, where $H$ is the graph gotten from $G$ by contracting each barrier in ${\cal B}$. By the definition of an essential $GS$-cut, it can be checked that $C$ is an essential $GS$-cut of $G$.
Hence we complete the proof.
%%%%%%%%%%%%%%%%%%%%%%%%%%%%%%%%%%%%%%%%%%%%%%%%%%%%%%%%%%%%%%%%%%%%%%%%%%%%%%
\end{proof}

\section{Remarks}

The proof of Theorem \ref{main} does not rely on Theorem \ref{thm:lam}. By the structure of an essential $GS$-cut,  Theorem \ref{thm:lam} can be inferred by Theorem \ref{main} (note that if $C$ is a barrier-cut, then $C$ is an ELP-cut  laminar with itself). Recall that if $C$ is a non-ELP tight cut of a matching covered graph $G$, then,  by Theorem \ref{thm:lam}, $|\ELP_G(C)|\geq 1$. And the lower bound is sharp by
 the remarks of Theorem \ref{thm:lamnum}. If $C$ is a barrier-cut of $G$, $|\ELP_G(C)|$ may also be 1. For example, in $H_n'$, $C'=\partial(\{v_1,v_2,\ldots, v_{2n+1}\})$ is the only ELP-cut.  So $|\ELP_{H_n'}(C')|=1$.

%2. By Theorem \ref{main} and the structure of an essential $GS$-cut, if $C$ is a non-trivial tight cut of a matching covered graph $G$ and $e\in C$, then  at least one end point of $e$ lie in a non-trivial barrier or 2-separation of $G$.

%%%%%%%%%%%%%%%%%%%%%%%%%%%%%%%%
%%%%%%%%%%%%%%%%%%%%%%%%%%%%%%

%%%%%%%%%%%%%%%%%%%%%%%%%%%%%%%%%%%%%%%%%##############################

%\section{Concluding remarks}
%\label{sec:cr}

%\begin{figure} [htbp]
%\begin{center}
%\includegraphics[width=8cm]{countex.eps}
%\end{center}
%\caption{  A 3-connected cubic matching covered graph without removable edges }\label{fig:ce}
%\end{figure}

\end{document}